\newcommand{\si}[1]{#1}
\newcommand{\jo}[1]{}
\newtheorem{theorem}{Theorem}[section]
\newtheorem{proposition}{Proposition}[section]
\newtheorem{corollary}{Corollary}[section]
\newtheorem{lemma}{Lemma}[section]
\newtheorem{definition}{Definition}[section]
\newcommand{\E}{\mathbb{E}}						 
\newcommand{\pcon}[1]{\Pi_{\K}(#1)}				 
\newcommand{\seq}[1]{\{#1^k\}_{k\in \N}}		 
\renewcommand{\bar}{\overline}                   
\newcommand{\bd}{\mathrm{bd}}                    
\newcommand{\grad}{\nabla}                       
\newcommand{\I}{\mathbb{I}}                      
\newcommand{\inner}[2]{\langle#1,#2\rangle}      
\newcommand{\interior}{\mathrm{int}}             
\newcommand{\K}{\mathcal{K}}                     
\newcommand{\lin}{\mathrm{lin}}                  
\newcommand{\norm}[1]{\|#1\|}                    
\newcommand{\enorm}[1]{\|#1\|_2}                 
\newcommand{\fnorm}[1]{\|#1\|_F}                 
\newcommand{\projs}{\Pi_{\mathbb{S}^m_+}}        
\newcommand{\R}{\mathbb{R}}  					 
\newcommand{\Lor}{\mathbb{L}}                    
\renewcommand{\Re}{\mathbb{R}}                   
\newcommand{\T}{\top\hspace{-1pt}}               
\newcommand{\N}{\mathbb{N}}                      
\newcommand{\s}{^{\star}}                        
\newcommand{\pol}{^{\textnormal{o}}}             
\newcommand{\sym}{\mathbb{S}^m}                  
\newcommand{\tpitchfork}{%
  \vbox{
    \baselineskip\z@skip
    \lineskip-.52ex
    \lineskiplimit\maxdimen
    \m@th
    \ialign{##\crcr\hidewidth\smash{$-$}\hidewidth\crcr$\pitchfork$\crcr}
  }%
}
\newcommand{\corr}{$^{\textrm{\Letter}}$}
\begin{document}

\title{On the weak second-order optimality condition for nonlinear semidefinite and second-order cone programming\footnotetext{The authors received financial support from FAPESP (grants 2018/24293-0, 2017/18308-2, and 2017/17840-2), CNPq (grants 303427/2018-3 and 404656/2018-8), PRONEX - CNPq/FAPERJ (grant E-26/010.001247/2016), and a Grant-in-Aid for scientific research (C)(19K11840) from Japan Society for the Promotion of Science.}}

\jo{\titlerunning{Second-order analysis for NSDP and NSOCP via sequential conditions}}

\si{
\author{
Ellen H. Fukuda \thanks{Graduate School of Informatics,
    Kyoto University, Kyoto, Japan.
	Email: {\tt ellen@i.kyoto-u.ac.jp}}	
\and
Gabriel Haeser \thanks{Department of Applied Mathematics, University of S{\~a}o Paulo, S{\~a}o Paulo, SP, Brazil. 
	Email: {\tt ghaeser@ime.usp.br}}	
\and 
Leonardo M. Mito \thanks{Department of Applied Mathematics, University of S{\~a}o Paulo, S{\~a}o Paulo, SP, Brazil. 
	Email: {\tt leokoto@ime.usp.br}}
}
}

\jo{
\author{
Ellen H. Fukuda
\and
Gabriel Haeser
\and 
Leonardo M. Mito
}

\authorrunning{E. H. Fukuda, G. Haeser, and L. M. Mito} 

\institute{ 
    Ellen H. Fukuda \at
    Department of Applied Mathematics and Physics,
    Kyoto University, Kyoto, Japan.\\
    \email{ellen@i.kyoto-u.ac.jp}
    \and
    Gabriel Haeser \at
    Department of Applied Mathematics, 
    University of S\~ao Paulo, 
    S{\~a}o Paulo, Brazil.\\
    \email{ghaeser@ime.usp.br} 
    \and
    Leonardo M. Mito \corr 
    \at
    Department of Applied Mathematics, 
    University of S{\~a}o Paulo, 
    S{\~a}o Paulo, Brazil.\\
	\email{leokoto@ime.usp.br}	
  }
}

\jo{\date{Received: date / Accepted: date}}

\maketitle

\abstract{Second-order necessary optimality conditions for nonlinear conic programming problems that depend on a single Lagrange multiplier are usually built under nondegeneracy and strict complementarity. In this paper we establish a condition of such type for two classes of nonlinear conic problems, namely semidefinite and second-order cone programming, assuming Robinson’s constraint qualification and a weak constant rank-type property which are, together, strictly weaker than nondegeneracy. Our approach is done via a penalty-based strategy, which is aimed at providing strong global convergence results for first- and second-order algorithms. Since we are not assuming strict complementarity, the critical cone does not reduce to a subspace, thus, the second-order condition we arrive at is defined in terms of the lineality space of the critical cone. In the case of nonlinear programming, this condition reduces to the standard second-order condition widely used as second-order stationarity measure in the algorithmic practice.

\si{
\

\textbf{Keywords:} Optimality conditions, Semidefinite programming, Second-order cone programming.}
}

\jo{\keywords{Optimality conditions \and Semidefinite programming \and Second-order cone programming}

\subclass{ 90C46 \and 90C30 \and 90C26 \and 90C22}
}


\section{Introduction}

Consider the following \textit{nonlinear conic programming} (NCP) problem in standard form:

\begin{equation}
  \tag{NCP}
  \begin{aligned}
    & \underset{x \in \mathbb{R}^{n}}{\text{Minimize}}
    & & f(x), \\
    & \text{subject to}
    & & g(x) \in  \K, \\
    & & & h(x) = 0,
  \end{aligned}
  \label{NCP}
\end{equation}
where $f\colon\R^{n}\to\R$, $g\colon\R^n \to \E$ and $h \colon \R^n
\to \R^p$ are twice continuously differentiable functions, $\E$ is a finite-dimensional linear space equipped with an inner product $\inner{\cdot}{\cdot}$ and the norm $\norm{\cdot}$ induced by it, and $\mathcal{K}\subseteq \E$ is a closed convex cone that is assumed to be \textit{self-dual}, which means $\K=\K^*\doteq \{w\in \E\colon \forall y\in \K, \langle w, y\rangle\geqslant 0\}$.

We are primarily interested in second-order necessary optimality conditions for two well-established particular cases of \eqref{NCP}:

\begin{itemize}
\item \hyperref[sec:socp]{\textit{Nonlinear second-order cone programming}} (NSOCP), which is obtained when $\E=\R^m$ and $\K$ is the so-called (Lorentz) \textit{second-order cone}, defined as $\Lor^{m}\doteq\{(w_0,\bar{w})\in \R\times \R^{m-1}\colon w_0\geqslant \enorm{\bar{w}}\}$ when $m>1$ and $\Lor^1\doteq \{w\in \R\colon w\geqslant 0\}$, or the Cartesian product of $r$ second-order cones in $\R^{m_i}$, with $i\in \{1,\dots,r\}$ and $m_1+\dots+m_r=m$;

\item  \hyperref[sec:sdp]{\textit{Nonlinear semidefinite programming}} (NSDP), which is obtained when $\E=\sym$ is the space of all $m\times m$ real symmetric matrices and $\K$ is the cone $\sym_+\doteq  \{W\in \sym\colon\forall d\in \R^m, d^{\T} Wd\geqslant 0\}$ of all positive semidefinite matrices, or a Cartesian product in the form $\K=\mathbb{S}^{m_i}_+\times \dots\times \mathbb{S}^{m_r}_+$, with $m_1+\dots+m_r=m$.
\end{itemize}

Both fields have grown independently and accumulated a large set of applications over the years, for example, in robust control~\cite{controllmi,controllmi2}, passive reduced-order modelling~\cite{freund2007}, structural optimization~\cite{robustness,structural}, the sphere covering problem~\cite{Mito2020}, and others (see~\cite{handbookconic,boydsocsurvey,handbookSDP} for a vast collection of examples). In conjunction, several algorithms have been developed for them, such as interior-point methods~\cite{benson2003,ipsdp,ipsocp}, sequential quadratic programming methods~\cite{sqpsdp,sqpsocp}, Newton-type methods~\cite{newtonsocp,KFF09}, and augmented Lagrangian methods~\cite{Santos2019,ahv,SDPNAL}, to name a few (see Yamashita and Yabe~\cite{yamashitasdpreview} for more details). Consequently, some theoretical aspects of NSOCP and NSDP, such as optimality conditions and regularity, have gained much relevance in the community as well. In particular, necessary optimality conditions are especially useful for giving theoretical global convergence support for iterative algorithms, in the sense that every feasible limit point of a given algorithm can be proven to satisfy some necessary optimality condition under a set of hypotheses. In fact, the reliability of an algorithm is deeply related with the strength of the optimality condition that supports its global convergence theory. From this point of view, second-order necessary optimality conditions improve the first-order ones by considering the curvature of the problem data over the set of directions where first-order information has little meaning, which is usually called \textit{cone of critical directions} (or \textit{critical cone}). Note that this kind of convergence theory is different from what is usually done for convex optimization problems, where \textit{second-order sufficient conditions} are used as convergence hypotheses. In the nonconvex case, the latter results in a local convergence analysis. Since the results of this paper are meant to be used in the aid of global convergence, we focus on necessary optimality conditions.

It is worth mentioning that second-order analysis in non-polyhedral conic contexts, such as NSOCP and NSDP, is considerably more intricate than in polyhedral contexts, such as in \textit{nonlinear programming} (NLP). This is justified by the fact that the curvature of $\K$ must be taken into account, besides the curvature of the functions defining the problem. The initial efforts to characterize this curvature were done by Kawasaki~\cite{Kawasaki1988}, whose results were generalized and refined by Cominetti~\cite{Cominetti1990}, and later completed by Bonnans, Cominetti, and Shapiro~\cite{bonn-comi-shap} with the notion of \textit{second-order regularity}. Then, Shapiro~\cite{Shapiro1997} obtained a specialized statement for it in the context of NSDP, that was later re-discovered by Forsgren~\cite{Forsgren2000}, Jarre~\cite{Jarre2012}, and Lourenço, Fukuda, and Fukushima~\cite{Lourenco2018}, using distinct nontrivial techniques that make each proof interesting on its own. For NSOCP, second-order necessary optimality conditions were first characterized by Bonnans and Ram{\'i}rez~\cite{bonnansramirez}, and later studied by Fukuda and Fukushima~\cite{Fukuda2016} who also presented sufficient conditions. In recent years, significant advances were obtained were obtained for very general classes of problems that have NSOCP and NSDP among their particular cases; see, for instance, the papers of Chieu et al.~\cite{Chieu2019} and Mohammadi, Mordukhovich, and Sarabi~\cite{Mohammadi2021}, both working under very weak assumptions, but while the former uses the classical notion of \textit{cone reducibility} introduced by~\cite{bonn-comi-shap}, the latter employs a new and more general concept called \textit{parabolic regularity} which allows obtaining second-order conditions by directly differentiating the indicator function in a particular sense instead of reducing the problem to remove its curvature at the point of interest. Thus, it is possible to say that the motivation for studying alternative ways of deriving second-order conditions for conic problems, in particular NSDP and NSOCP, has gone far beyond practical usage, but nevertheless we believe practice should not be ignored. 

With this in mind, some useful tools for proving new first- and second-order optimality conditions for optimization problems, which are deeply connected to the algorithmic approach, are the so-called \textit{sequential optimality conditions}. They were introduced in NLP, and later extended to NSOCP and NSDP, as KKT variants designed for building convergence theory of iterative algorithms (for details, we refer to the work of Andreani et al.~\cite{Santos2019,Andreani2020,ahm10,ahv,ams10}) and they gained some attention for being able to sharpen most convergence results for them in a general and unified manner (see, for instance,~\cite[Sec. 5.2]{CPG}). Also, a second-order sequential condition has recently appeared in the work of Andreani et al.~\cite{akkt2} for NLP, which not only provided an ideal way of incorporating second-order information in numerical methods, but also an intuitive strategy for building second-order analysis under weaker hypotheses than the traditional \textit{linear independence constraint qualification} (LICQ). These improvements were obtained by considering a somewhat ``weak'' second-order necessary optimality condition in the sense that only the lineality space of the critical cone is taken into account in their results. However, as it is well-known in NLP, this ``weak'' condition is the most suitable second-order condition for global convergence analysis of algorithms, since the stronger conditions that deal with the whole critical cone are not guaranteed to be fulfilled at the convergence points of a large class of algorithms, such as barrier-type methods~\cite{gould1999note} and augmented Lagrangian-type methods~\cite{andreani2018note}, even under very strong hypotheses. Besides, checking the validity of the ``strong'' second-order condition is an NP-hard class problem, whereas checking the ``weak'' condition is of polynomial class. Nevertheless, as far as we know, the latter condition has never received due attention in nonconvex conic contexts other than NLP.

Inspired by~\cite{akkt2}, we prove that every local minimizer satisfies the weaker version of the second-order necessary condition, for NSOCP and NSDP, but under weaker assumptions than all previous related works. In fact, the meaning of our results lies in the fact we assume neither \textit{nondegeneracy} nor \textit{strict complementarity}, since under these hypotheses the ``weak'' and ``strong'' second-order conditions are equivalent. Our approach is based on sequential conditions, which suggests that our results may be useful for proving convergence of algorithms to second-order stationary points of NSOCP and NSDP problems. We stress that even though NSOCP can be represented in terms of NSDP, it is interesting to discriminate them since the numerical methods designed to solve each problem might have different performances in practice \cite{surveysocp}. Also, it is not straightforward to derive second-order results for NSOCP only based on the NSDP results.

This paper is structured as follows: we begin by reviewing some classical results on first- and second-order optimality conditions for \eqref{NCP} and its particular cases with some degree of details, in Section~\ref{sec:prelim}. Then, we present our second-order analysis for NSOCP in Section~\ref{sec:socp}, and for NSDP in Section~\ref{sec:sdp}. At last, in Section \ref{sec:conclusion} we give some final considerations about this paper and related works.

\section{Technical background}\label{sec:prelim}

In this section, we introduce our notation and present some results from the literature that are directly related to ours. We also review in details some classical results on first- and second-order optimality conditions for NSOCP and NSDP.

We consider the standard inner product in $\R^n$, given by $\langle a,b \rangle\doteq \sum_{i=1}^n a_i b_i$, and the Euclidean norm, given by $\enorm{a}\doteq \sqrt{\langle a, a\rangle}$, for every $a,b\in \R^n$. The terms  $\interior(\K)$, $\bd(\K)$, and $\bd^+(\K)$ stand for the \textit{interior, boundary}, and \textit{boundary excluding the origin} of $\K$, respectively. Also, for any closed convex cone $C$, $\text{lin}(C)\doteq  C\cap (-C)$ denotes its \textit{lineality space}, which is the largest subspace contained in $C$. 

For a given finite indexed set $\{a_i\colon i\in \{1,\dots,k\}\}\subset \R$, we denote the array that has $a_i$ in its $i$-th position by $[a_i]_{i\in \{1,\dots,k\}}\in \R^k$ and, analogously, the matrix whose entries are the elements of $\{b_{ij}\colon i,j\in \{1,\dots,k\}\}\subset \R$ is denoted by $[b_{ij}]_{i,j\in \{1,\dots,k\}}\in \R^{k\times k}$. The identity matrix of $\R^{n\times n}$ is denoted by $\I_n$.  The \textit{gradient} and the \textit{Hessian} of a function $f \colon \Re^{n} \to \Re$ at an arbitrary point $x \in \Re^{n}$ are represented by $\grad f(x)$ and $\grad^2 f(x)$, respectively, and the first derivative of $g \colon \Re^{n} \to \E$ at $x$ is the linear mapping $Dg(x)[\cdot]\colon\R^n\to \E$ defined by the action $$Dg(x)[h]\doteq  \sum_{i=1}^n \partial_i g(x) h_i$$ for every $h\in \R^n$, where $\partial_i g(x)\in \E$ is the partial derivative of $g$ in the $i$-th variable, at $x$. In particular, if $\E=\R^m$ then $Dg(x)$ is exactly the Jacobian matrix of $g$ at $x$, in the canonical basis of $\R^m$; for instance, in this case the $i$-th row of $Dg(x)$ is given by the \textit{transpose} of $\nabla g_i(x)$, which is denoted by $\nabla g_i(x)^\T$, where $i\in\{1,\dots,m\}$. The \textit{adjoint} of $Dg(x)$ is the linear mapping $Dg(x)^*[\cdot]\colon \E\to \R^n$ such that $\langle Dg(x)[h],w\rangle=\langle h,Dg(x)^*[w]\rangle$ holds for every $h\in \R^n$ and every $w\in \E$, hence $$Dg(x)^*[w]=[\langle\partial_i g(x),w\rangle]_{i\in \{1,\dots,n\}},$$ for every $w\in \E$ and, if $\E=\R^m$ then $Dg(x)^*=Dg(x)^\T$. Similarly, we define the action of the linear mapping $D^2g(x)^*[\cdot]\colon \E\to\mathbb{R}^{n\times n}$ by $$D^2g(x)^*[w]\doteq  [\langle\partial_i\partial_j g(x),w \rangle]_{i,j\in \{1,\dots,n\}},$$ for every $w\in \E$.

The \textit{orthogonal projection} of $w\in \E$ onto $\K$ is the point $\Pi_{\K}(w)\in \K$ such that $$\norm{w-\Pi_{\K}(w)}=\min\{\norm{w-z}\colon z\in \K\}.$$ Note that $\Pi_{\K}(w)$ is well-defined as a convex function of $w$ since $\K$ is closed and convex. Also, a very useful fact is that every $w\in \E$ can be written as $$w=\Pi_{\K}(w)-\Pi_{\K}(-w),$$ with $\langle\Pi_{\K}(w),\Pi_{\K}(-w)\rangle=0$. This is commonly called the \textit{Moreau's decomposition} of $w$ and one of its many consequences is that $w\in \K$ if, and only if, $\Pi_{\K}(-w)=0$. Hence, the function $$\Phi(x)\doteq \frac{1}{2}\left(\enorm{h(x)}^2 + \norm{\Pi_{\K}(-g(x))}^2\right)$$ can be used as a measure of violation of the constraints of \eqref{NCP}, that is, a measure of infeasibility. A result by Fitzpatrick and Phelps~\cite[Thm. 2.2]{Fitzpatrick1982} can be employed to derive an expression for the gradient of $\Phi$ at $x$:

\begin{theorem}\label{thm:fitz}
For every $x\in \R^n$, we have
\jo{$}$\nabla \Phi(x)= Dh(x)^\T h(x)-Dg(x)^*[\pcon{-g(x)}].$\jo{$}
\end{theorem}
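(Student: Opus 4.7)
The plan is to split $\Phi$ into its two summands and differentiate each separately, reserving the nontrivial part for the projection term. The first summand $\tfrac{1}{2}\enorm{h(x)}^2$ is a composition of the squared Euclidean norm on $\R^p$ with the smooth map $h$, so the standard chain rule immediately yields $Dh(x)^\T h(x)$. All the work lies in the second summand $\psi(x)\doteq \tfrac{1}{2}\norm{\pcon{-g(x)}}^2$.

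First I would use Moreau's decomposition to recast $\psi$ as a squared distance to $\K$: since $g(x)=\pcon{g(x)}-\pcon{-g(x)}$ with the two projections orthogonal, one has $\norm{\pcon{-g(x)}}=\norm{g(x)-\pcon{g(x)}}=\mathrm{dist}(g(x),\K)$, so that $\psi(x)=\tfrac{1}{2}\mathrm{dist}(g(x),\K)^2$. The key external input, provided by Fitzpatrick--Phelps~\cite[Thm.~2.2]{Fitzpatrick1982}, is that the squared distance function $y\mapsto \tfrac{1}{2}\mathrm{dist}(y,\K)^2$ on the Hilbert space $\E$ is Fr\'echet differentiable with gradient $y-\pcon{y}$; this is where the hypotheses on $\K$ (closed and convex, guaranteeing a well-defined and single-valued projection) are used.

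With this result in hand, I would apply the chain rule to the composition $x\mapsto g(x)\mapsto \tfrac{1}{2}\mathrm{dist}(g(x),\K)^2$. Because the outer derivative at $g(x)$ is the element $g(x)-\pcon{g(x)}\in \E$ and the inner derivative is the linear map $Dg(x)\colon \R^n\to\E$, the adjoint identity defining $Dg(x)^*$ yields $\nabla \psi(x)=Dg(x)^*\bigl[g(x)-\pcon{g(x)}\bigr]$. A second invocation of Moreau's decomposition replaces $g(x)-\pcon{g(x)}$ by $-\pcon{-g(x)}$, so that $\nabla \psi(x)=-Dg(x)^*[\pcon{-g(x)}]$. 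Summing with the contribution from the $h$-term gives precisely the claimed expression.

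I do not expect any serious obstacle: the entire argument reduces to combining the Fitzpatrick--Phelps differentiability theorem with two applications of Moreau's decomposition, and a careful bookkeeping of the adjoint $Dg(x)^*$ (which is necessary because $g$ takes values in the abstract inner product space $\E$ rather than in $\R^m$). The only item worth flagging is that the chain rule must be read through the adjoint rather than through a Jacobian transpose, but this is built into the definition of $Dg(x)^*$ recalled in the preliminaries.
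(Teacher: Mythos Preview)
Your proposal is correct and mirrors exactly what the paper does: the theorem is stated there without a written-out proof, only as a consequence of Fitzpatrick--Phelps~\cite[Thm.~2.2]{Fitzpatrick1982}, and your argument simply fills in the routine details (chain rule for the $h$-term, Moreau to identify $\norm{\pcon{-g(x)}}$ with $\mathrm{dist}(g(x),\K)$, Fitzpatrick--Phelps for the gradient of the squared distance, and Moreau once more to rewrite $g(x)-\pcon{g(x)}$). The only point worth noting is that the form of Moreau's decomposition you invoke, $w=\pcon{w}-\pcon{-w}$, relies on the self-duality of $\K$ assumed in the paper; this is already recorded in the preliminaries, so nothing is missing.
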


Also, we observe that $\nabla \Phi$ is a Lipschitz function, but it is not differentiable everywhere. In our analyses, we make use of its second derivative, which must be taken in the nonsmooth sense.

\subsection{Some elements of nonsmooth analysis}

Let $X$ and $Y$ be finite-dimensional normed linear spaces over $\R$. Let $F\colon X\to Y$ be a locally Lipschitz function and denote the set in which it is differentiable by $\mathcal{D}(F)$. The so-called \textit{B-subdifferential} of $F$ at a point $x\in X$, is the set of all limiting derivatives of $F$ at $x$, denoted by
$$\partial_B F(x)\doteq \left\{ V \in \mathcal{L}(X,Y) \colon \exists \seq{x}\subset \mathcal{D}(F), \ x^k\to x, \ D F(x^k)\to V \right\},$$ where $\mathcal{L}(X,Y)$ denotes the set of all linear mappings from $X$ to $Y$, and similarly to the previous section, $DF(x^k)\in \mathcal{L}(X,Y)$ denotes the first derivative of $F$ at $x^k$, for every $x^k\in \mathcal{D}(F)$. Evidently, for every $x$, the set $\partial_B F(x)$ is compact, and it is a singleton when $x\in \mathcal{D}(F)$, but it is not convex in general. Then, we also define the \textit{Clarke subdifferential} of $F$ at $x$, denoted by $\partial F(x)$, as the \textit{convex hull} of $\partial_B F(x)$, that is, $$\partial F(x)\doteq \text{Conv}(\partial_B F(x)).$$

In particular, when $X=\R^n$ and $Y=\R$, the \textit{generalized Hessian} of $F$ at $x$ is defined as 
$$\partial^2 F(x)\doteq \partial \nabla F(x),$$ which is the convex hull of the set of all limiting Hessian matrices of $F$ at $x$. Following Hiriart-Urruty et al.~\cite{Urruty1984}, the second-order necessary optimality condition for unconstrained minimizers of $F$ when it is differentiable, is the following:

\begin{theorem}\label{thm:nonsmooth2nd}
If $x\s$ is a local minimizer of a differentiable function $F\colon \R^n\to \R$ such that $\nabla F$ is locally Lipschitz, then $\nabla F(x\s)=0$, and for each $d\in \R^n$, there exists some $M\in \partial^2 F(x\s)$ such that $d^\T M d\geqslant 0$. In other words, $$\limsup_{d\in \R^n} d^\T \partial^2 F(x\s) d\geqslant 0.$$
\end{theorem}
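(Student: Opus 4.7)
My plan is to handle the two conclusions separately: the first-order condition $\nabla F(x\s)=0$ follows immediately from Fermat's rule applied to $t\mapsto F(x\s+td)$ in every direction $d$ (since $F$ is differentiable and $x\s$ is a local minimizer), so the substance lies in the second-order statement. The key tool I would invoke is a nonsmooth second-order Taylor expansion for $C^{1,1}$ functions, namely: for every $x,y\in \R^n$ there exists a symmetric matrix
\[
A \in \mathrm{conv}\bigl\{ M\in \partial^2 F(z) : z\in [x,y]\bigr\}
\]
such that $F(y)=F(x)+\nabla F(x)^\T(y-x)+\tfrac{1}{2}(y-x)^\T A (y-x)$. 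This is exactly the kind of expansion available for functions whose gradient is locally Lipschitz, and it plays the role of the classical integral remainder.

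Granting this expansion, I would fix an arbitrary direction $d\in \R^n$ and apply it with $y=x\s+td$ for every sufficiently small $t>0$. Local minimality gives $F(x\s+td)-F(x\s)\geqslant 0$, and combined with $\nabla F(x\s)=0$ it yields a matrix $A_t$ in the convex hull of $\bigcup_{z\in [x\s,x\s+td]}\partial^2 F(z)$ with $d^\T A_t d\geqslant 0$. I would then let $t\downarrow 0$: since $\partial^2 F$ is upper semicontinuous and locally bounded around $x\s$ (being the Clarke subdifferential of the locally Lipschitz map $\nabla F$), the family $\{A_t\}$ is bounded and each cluster point lies in $\partial^2 F(x\s)$, which is already convex. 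Extracting a convergent subsequence produces some $M\in \partial^2 F(x\s)$ with $d^\T M d\geqslant 0$, which, since $d$ was arbitrary and $\partial^2 F(x\s)$ is compact, is equivalent to $\limsup_{M\in \partial^2 F(x\s)} d^\T M d\geqslant 0$.

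The main obstacle is establishing the nonsmooth Taylor formula itself: because $\nabla F$ is only almost everywhere differentiable (by Rademacher's theorem), there is no classical integral remainder to lean on. The standard route, and the one followed in the cited work of Hiriart-Urruty, Strodiot, and Hien Nguyen, is to mollify $F$ by convolution with a smooth kernel $\varphi_\varepsilon$, apply the classical second-order mean value formula to the smooth approximants $F_\varepsilon$, and then pass to the limit $\varepsilon\downarrow 0$ using the inclusion $\nabla^2 F_\varepsilon(z)\in \mathrm{conv}\bigcup_{\|w-z\|\leqslant \varepsilon} \partial^2 F(w)$ together with the upper semicontinuity of the generalized Hessian. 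Once this mean value theorem is in hand, the remainder of the argument is the routine limiting procedure sketched above.
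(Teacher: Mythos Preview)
The paper does not actually supply a proof of this theorem; immediately after the statement it writes ``We refer to~\cite[Thm. 3.1]{Urruty1984} for a proof.'' Your proposal is a faithful reconstruction of precisely that argument from Hiriart-Urruty, Strodiot, and Nguyen: the $C^{1,1}$ second-order mean value formula (their Theorem~2.3, obtained by mollification and passage to the limit) applied along the ray $t\mapsto x\s+td$, followed by a compactness/upper-semicontinuity argument to push the matrix $A_t$ into $\partial^2 F(x\s)$ as $t\downarrow 0$. So your approach is correct and coincides with the one the paper invokes by citation.
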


We refer to~\cite[Thm. 3.1]{Urruty1984} for a proof. As observed by Hiriart-Urruty et al., it is not true that $d^\T M d\geqslant 0$ for all $M\in \partial^2 F(x\s)$, in general, and not even this holds for some fixed $M$ and all $d$. We employ this result to analyse the second derivative of $\Phi$, which is a \textit{nonsmooth-smooth} composition, so a chain rule is also required. There are several different extensions of the chain rule for subdifferentials, but the following result, by P{\'a}les and Zeidan~\cite{paleszeidan}, is enough for our purposes:

\begin{theorem}\label{thm:chainrule}
Fix some $x\in X$ and let $G\colon X\to Y$ and $F\colon Y\to Y$ be functions such that $G$ is continuously differentiable at $x$, and $F$ is Lipschitz in a neighborhood of $G(x)$. Then, we have $$\partial (F\circ G)(x)\subseteq \partial F(G(x))\circ DG(x),$$ where $\partial F(G(x))\circ DG(x)\doteq \{V\circ DG(x)\colon V\in \partial F(G(x))\}$.
\end{theorem}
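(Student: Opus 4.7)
The plan is to reduce the statement to an inclusion at the level of B-subdifferentials, then pass to convex hulls.

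First, I would observe that $\partial F(G(x)) \circ DG(x)$ is already convex, being the image of the convex set $\partial F(G(x))$ under the linear map $U \mapsto U \circ DG(x)$. Since $\partial(F \circ G)(x)$ is by definition the convex hull of $\partial_B(F \circ G)(x)$, it therefore suffices to prove the sharper inclusion
$$\partial_B(F\circ G)(x) \subseteq \partial F(G(x)) \circ DG(x).$$

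Second, I would take $V \in \partial_B(F\circ G)(x)$ and fix a witnessing sequence $x^k \to x$ with $F\circ G$ differentiable at each $x^k$ and $D(F\circ G)(x^k) \to V$. The natural wish is to apply the classical chain rule at each $x^k$, writing $D(F \circ G)(x^k) = DF(G(x^k)) \circ DG(x^k)$; however, differentiability of the composition does not entail differentiability of $F$ at $G(x^k)$, and this is precisely the main obstacle --- particularly in the absence of any surjectivity assumption on $DG(x)$. I would address it by a perturbation argument: Rademacher's theorem applied to $F$ in a neighborhood of $G(x)$ provides a set $\Omega$ of full Lebesgue measure on which $F$ is differentiable, and a selection procedure --- combining the local Lipschitz constant of $F$ (to bound the variation of the composition) with the continuity of $DG$ at $x$ --- produces a replacement sequence $\tilde{x}^k \to x$ such that $G(\tilde{x}^k) \in \Omega$ while $D(F\circ G)(\tilde{x}^k)$ still converges to $V$, via a suitable diagonalization over the radii of the balls in which the approximation is carried out.

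Third, at each $\tilde{x}^k$ the classical chain rule now applies, giving $D(F\circ G)(\tilde{x}^k) = DF(G(\tilde{x}^k)) \circ DG(\tilde{x}^k)$. The Lipschitz constant of $F$ near $G(x)$ uniformly bounds the operator norms $\|DF(G(\tilde{x}^k))\|$, so by compactness I may extract a subsequence along which $DF(G(\tilde{x}^k)) \to U$; by construction $U \in \partial_B F(G(x)) \subseteq \partial F(G(x))$. Continuity of $DG$ at $x$ yields $DG(\tilde{x}^k) \to DG(x)$, and passing to the limit in the classical chain rule identity gives $V = U \circ DG(x) \in \partial F(G(x)) \circ DG(x)$. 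This establishes the B-subdifferential inclusion; taking convex hulls of both sides then concludes the proof.
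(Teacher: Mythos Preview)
There is a genuine gap in your perturbation step. Your argument hinges on producing, for each $k$, a point $\tilde{x}^k$ near $x^k$ with $G(\tilde{x}^k)$ in the full-measure set $\Omega\subseteq Y$ where $F$ is differentiable. But Rademacher's theorem gives full measure in $Y$, not in the image of $G$; when $DG(x)$ is not surjective --- and nothing in the hypotheses forces it to be --- the set $G(B(x,r))$ can be Lebesgue-null in $Y$ and lie entirely inside the non-differentiability locus of $F$. A concrete instance: take $X=\R$, $Y=\R^2$, $G(t)=(t,0)$, and $F(y_1,y_2)=(y_1,|y_2|)$. Then $F\circ G$ is linear, yet $F$ fails to be differentiable at every point of $\textnormal{Im}(G)=\{y_2=0\}$, so no admissible $\tilde{x}^k$ exists. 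The conclusion of the theorem still holds in this example, but your mechanism for reaching it does not.

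Even setting that obstruction aside, the claim that $D(F\circ G)(\tilde{x}^k)\to V$ ``via a suitable diagonalization'' needs more than proximity of $\tilde{x}^k$ to $x^k$: the derivative of $F\circ G$ is generally discontinuous, so closeness of base points does not control closeness of derivatives, and the Lipschitz constant of $F$ only bounds their norms. The paper does not attempt a pointwise selection argument at all; it simply invokes the chain rule of P{\'a}les and Zeidan, whose proof proceeds through a support-functional (scalarization) mechanism adapted to spaces with the Radon--Nikod{\'y}m property, and in particular does not require locating differentiability points of $F$ inside $\textnormal{Im}(G)$. If you want a self-contained finite-dimensional argument, the route through Clarke's generalized directional derivative --- bounding $(F\circ G)^\circ(x;v)$ by $F^\circ(G(x);DG(x)v)$ after composing with linear functionals on $Y$, and then passing to $\partial$ via its support-function characterization --- avoids the surjectivity issue entirely.
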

\begin{proof}
The result follows from~\cite[Thm. 5.1]{paleszeidan} since it was originally proved for Banach spaces that satisfy the Radon-Nikodým property, which holds for every reflexive space, and every finite-dimensional space with a norm is reflexive.   
\end{proof}

Specificities about the subdifferential of the orthogonal projection onto the second-order and the semidefinite cone will be given in their respective sections.

\subsection{Necessary optimality conditions and constraint qualifications}

A \textit{constraint qualification} (CQ) is any assumption over the constraints at a feasible point $x$, that implies that the feasible set is similar to its first-order approximation around $x$. For instance, one of the most relevant ones is \textit{Robinson's CQ}~\cite{Robinson1976}, that holds at a feasible point $x$ when $Dh(x)$ has full row rank and there exists some $d\in \mathbb{R}^n$ such that\footnote{We use this characterization of Robinson's CQ as a definition because $\K$ is assumed to be self-dual, and consequently, to have nonempty interior.}
\begin{center}
$g(x)+Dg(x)[d]\in \interior(\mathcal{K})$\\
and\\
$Dh(x)d=0$.
\end{center}

It is widely known that Robinson's CQ is a generalization of the classical \textit{Mangasarian-Fromovitz constraint qualification} (MFCQ) from NLP. Such regularity condition allows one to study the optimality of a point in terms of the first-order approximation of the problem around it, that is, it is possible to prove that for every local solution $x\s$ of (\ref{NCP}) that satisfies Robinson's CQ, there exists some $\omega\s\in \mathcal{K}$ and some $\mu\s\in \R^p$ such that 

\begin{equation}\label{ncp:kktstat}
\nabla_x L(x\s, \omega\s,\mu\s)=0,
\end{equation}
and
\begin{equation}\label{ncp:kktcomp}
\inner{g(x\s)}{\omega\s}=0,
\end{equation}
where $$L(x, \omega,\mu)\doteq  f(x)-\langle g(x),\omega\rangle+\langle h(x),\mu\rangle$$ is the \textit{Lagrangian} function of \eqref{NCP} and $$\nabla_x L(x, \omega,\mu) \doteq \nabla f(x)-Dg(x)^*[\omega]+Dh(x)^\T\mu$$ is the gradient of $L(x,\omega,\mu)$ with respect to $x$. Equations \eqref{ncp:kktstat} and \eqref{ncp:kktcomp} compose the so-called \textit{Karush-Kuhn-Tucker} (KKT) conditions and, in this context, $\omega\s$ and $\mu\s$ are \textit{Lagrange multipliers} associated with $x\s$. Points that satisfy the KKT conditions are often called \textit{first-order stationary} or \textit{KKT points}. Condition \eqref{ncp:kktcomp} is often called \textit{complementarity}, and when additionally $-\omega\s$ belongs to the relative interior of $T_\K(g(x\s))\pol$, where
\si{\[T_{\K}(g(x\s))\doteq \{d\in \E\colon \exists \{d^k\}_{k\in \N}\to d, \ \exists \{\alpha^k\}_{k\in\N}\to 0, \ \forall k\in \N, \alpha^k>0, \ g(x\s)+\alpha^k d^k\in \K\}\]}
\jo{\[T_{\K}(g(x\s))\doteq \left\{
d\in \E\colon \begin{array}{l}
\exists \{d^k\}_{k\in \N}\to d, \ \exists \{\alpha^k\}_{k\in\N}\to 0, \\ \forall k\in \N, \alpha^k>0, \ g(x\s)+\alpha^k d^k\in \K
\end{array}
\right\}\]}
is the (\textit{Bouligand}) \textit{tangent cone} to $\K$ at $g(x\s)$, we say that \textit{strict complementarity} holds at the pair $(x\s,\omega\s)$~\cite[Def. 4.74]{bshapiro}. A relevant implication of Robinson's CQ is the boundedness of the set $\mathcal{M}(x\s)$ of all Lagrange multipliers associated with a local solution $x\s$. 

Second-order optimality conditions give extra information over the set of directions where first-order information is not meaningful. That is, we are interested in the set $$C(x\s)\doteq \{d\in \R^n\colon \langle\nabla f(x\s),d \rangle=0, \ Dh(x\s)^\T d=0, \ Dg(x\s)[d]\in T_{\K}(g(x\s))\},$$ which is the critical cone of (\ref{NCP}) at $x\s$. If Robinson's CQ holds at a local minimizer $x\s$ of \eqref{NCP}, then besides KKT, it also satisfies the \textit{basic second-order necessary condition} (BSOC), that is, for every $d\in C(x\s)$ there are Lagrange multipliers $\omega_d\s\in \K$ and $\mu_d\s\in \R^p$ such that \eqref{ncp:kktstat}, \eqref{ncp:kktcomp}, and
\begin{equation}\label{soncineq}
d^\T(\nabla_{x}^2L(x\s,\omega_d\s,\mu_d\s) + \sigma(x\s,\omega_d\s))d\geqslant 0
\end{equation}
hold, where $$\nabla_{x}^2L(x,\omega,\mu)\doteq  \nabla^2f(x)-D^2g(x)^*[\omega]+\sum_{i=1}^p \mu_i\nabla^2 h_i(x)$$ and $\sigma(x,\omega)$ is the so-called ``\textit{sigma-term}'', as presented by Cominetti~\cite[Thm. 4.1]{Cominetti1990}. In that paper, the author builds second-order conditions for (\ref{NCP}) based on the \textit{second-order tangent set} of $\K$ at $g(x)$ along $Dg(x)[d]$, that may be denoted by $T^2_{\K}(g(x),Dg(x)[d])$, and then establishes a ``dual form'' for it using the support function of $T^2_{\K}(g(x),Dg(x)[d])$, which is precisely the sigma-term. Hence, the sigma-term represents a possible curvature of $\K$ at $g(x)$, to some extent, and it can be proved that $\sigma(x,\omega)=0$ when $\K$ is \textit{polyhedral}, such as in NLP (for details, see~\cite{Cominetti1990}). In fact, the difficulty of second-order analysis in contexts more general than NLP lies almost entirely on the characterization of the sigma-term, which can be a very challenging task.

One of the major practical drawbacks of BSOC is that in order to verify whether it holds or not at a given point $x$, one must know the whole set $\mathcal{M}(x)$, which is not always possible. The stronger optimality condition where inequality~\eqref{soncineq} holds for every $d\in C(x\s)$, for some pair of multipliers $(\omega\s,\mu\s)$ (not depending on $d$), which is sometimes called the \textit{semi-strong necessary optimality condition}, does not present such a drawback. However, deciding the positivity of a matrix over a cone is an NP-hard class problem~\cite{nphard}, and so is checking the semi-strong condition.

A more practical alternative to BSOC and the semi-strong condition is the so-called \textit{weak second-order necessary condition} (WSOC), which is defined as follows:

\begin{definition}~\label{wsonc-def}
Let $x\s$ be a KKT point associated with some Lagrange multipliers $\omega\s\in \K$ and $\mu\s\in \R^p$. We say that WSOC holds at $x\s$ when 
\begin{equation}\label{wsoncineq}
d^\T(\nabla_{x}^2L(x\s,\omega\s,\mu\s) + \sigma(x\s,\omega\s))d\geqslant 0,
\end{equation}
for every $d\in S(x\s)\doteq\textnormal{lin}(C(x\s))$, which is the largest subspace contained in $C(x\s)$.
\end{definition}

Note that in Definition~\ref{wsonc-def} we only take directions in the subspace $S(x\s)$, called the \textit{critical subspace} of \eqref{NCP} at $x\s$, which coincides with $C(x\s)$ under strict complementarity\footnote{We will give a short proof for the fact $C(x\s)=\lin(C(x\s))$ under strict complementarity, for completeness: Let $d\in C(x\s)$ and suppose that there exists a Lagrange multiplier $-\omega\s$ in the relative interior of $T_{\K}(g(x\s))\pol$. Note that $\langle \nabla f(x\s),d\rangle=\langle\omega\s,Dg(x\s)[d]\rangle=0$ by the KKT conditions. Hence, $-\omega\s\in T_{\K}(g(x\s))\pol\cap \{Dg(x\s)[d]\}^\perp$, which implies $T_{\K}(g(x\s))\pol\subseteq \{Dg(x\s)[d]\}^\perp$ and, consequently, $\textnormal{span}(Dg(x\s)[d])\subseteq T_{\K}(g(x\s))$. Then, $Dg(x\s)[d]\in \lin(T_{\K}(g(x\s)))$, so $d\in \lin(C(x\s))$.}. At first sight, a second-order condition that only covers $S(x\s)$ instead of the whole $C(x\s)$ may seem disadvantageous in comparison with the semi-strong condition. In fact, the semi-strong condition implies WSOC. However, there are strong evidences that suggest that it is unlikely that BSOC or the semi-strong condition can be used to support the global convergence theory of any practical algorithm, unless $C(x\s)=S(x\s)$. In fact, for the particular case of NLP, Gould and Toint~\cite{gould1999note} presented a simple counterexample, with a quadratic objective function and a constraint of the form $x\geqslant 0$, for which a large class of barrier-type methods may produce an output sequence whose limit points fail to satisfy both BSOC and the semi-strong condition, even when every iterate of such sequence satisfies the second-order sufficient condition for its respective penalized problem. Later, Andreani and Secchin~\cite{andreani2018note} made a small modification in Gould and Toint's counterexample to obtain the same conclusion for augmented Lagrangian-type algorithms. WSOC, on the other hand, is guaranteed to be fulfilled under weak assumptions for some variants of the two methods we mentioned above~\cite{abms2,Prieto2003}, and also for a regularized SQP method for NLP~\cite{grSQP}. The negative conclusions regarding BSOC and the semi-strong condition have led some authors to doubt the existence of an algorithm that could be associated with a second-order condition that takes the whole critical cone into consideration. Following this discussion, Andreani et al.~\cite{akkt2} managed to characterize the weakest second-order constraint qualification that could guarantee the fulfilment of the semi-strong condition at the limit points of a large class of penalization-type algorithms that encompasses, for instance, all the aforementioned ones. However, such a constraint qualification was proven not to imply nor to be implied by LICQ~\cite[Ex. 4.5 and 4.6]{akkt2}, and to be violated even for box constraints. 

Despite the good algorithmic advantages of WSOC, Robinson's CQ alone is not enough to guarantee its fulfilment at local minimizers -- see, for instance, the counterexample by Baccari~\cite[Sec. 3]{Baccari2004} or the discussion in~\cite{Behling2017a}. Instead, the existing results on WSOC usually require a stronger CQ called \textit{nondegeneracy} (or \textit{transversality}), which holds at a feasible point $x$ when 

\begin{equation}\label{ncp:nondegen}
\E \times \R^p = \left(\text{lin}(T_{\K}(g(x)))+\text{Im}(Dg(x))\right)\times \text{Im}(Dh(x)).
\end{equation}
It was translated from differential equations to optimization by Shapiro and Fan~\cite{shapfan} and it is well-known that, for every nondegenerate solution $x\s$ of \eqref{NCP}, the set $\mathcal{M}(x\s)$ is a singleton, what resembles the effects of LICQ in NLP. Thus, nondegeneracy is analogous to LICQ, in this sense. 

\begin{theorem}\label{classicwsoc}
If $x\s$ is a local minimizer of \eqref{NCP} that satisfies nondegeneracy, then the KKT conditions hold at $x\s$ for some Lagrange multipliers $\omega\s\in \K$ and $\mu\s\in \R^p$ and, moreover, WSOC holds with respect to these multipliers. 
\end{theorem}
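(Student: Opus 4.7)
The plan is to obtain Theorem~\ref{classicwsoc} as a short consequence of the basic second-order necessary condition (BSOC) combined with the well-known fact that nondegeneracy forces the Lagrange multiplier set $\mathcal{M}(x\s)$ to be a singleton. Once BSOC provides, for each $d\in C(x\s)$, a pair of multipliers $(\omega_d\s,\mu_d\s)\in\mathcal{M}(x\s)$ making inequality~\eqref{soncineq} hold, the singleton property collapses the $d$-dependence and leaves the same pair $(\omega\s,\mu\s)$ valid for every $d\in C(x\s)$; restricting to $d\in S(x\s)=\lin(C(x\s))\subseteq C(x\s)$ then yields inequality~\eqref{wsoncineq} and hence WSOC.

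Concretely, I would first verify that nondegeneracy implies Robinson's CQ at $x\s$. From~\eqref{ncp:nondegen} one immediately reads that $Dh(x\s)$ has full row rank and that $\lin(T_{\K}(g(x\s)))+\textnormal{Im}(Dg(x\s))=\E$; combining this identity with the inclusion $\lin(T_{\K}(g(x\s)))\subseteq T_{\K}(g(x\s))$ and with $\interior(\K)\neq\emptyset$ (a consequence of self-duality of $\K$), a standard convex-analytic argument produces a direction $d\in\R^n$ satisfying $Dh(x\s)d=0$ and $g(x\s)+Dg(x\s)[d]\in \interior(\K)$. With Robinson's CQ established at the local minimizer $x\s$, the KKT portion of the statement follows from the classical first-order result recalled in~\eqref{ncp:kktstat}--\eqref{ncp:kktcomp}, and Cominetti's BSOC~\cite{Cominetti1990} (recalled immediately before Definition~\ref{wsonc-def}) furnishes, for each $d\in C(x\s)$, some $(\omega_d\s,\mu_d\s)\in\mathcal{M}(x\s)$ realizing~\eqref{soncineq}. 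The proof is closed by invoking the singleton property of $\mathcal{M}(x\s)$ under nondegeneracy (noted in the excerpt just after~\eqref{ncp:nondegen}), which forces $(\omega_d\s,\mu_d\s)=(\omega\s,\mu\s)$ for every $d$.

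There is essentially no substantial obstacle in this argument: the nontrivial ingredients (BSOC and the characterization of the sigma-term) are imported from the literature, and $\sigma(x\s,\omega\s)$ plays no active role because it depends only on $(x\s,\omega\s)$, which is now fixed by uniqueness of the multiplier. The only mildly delicate point is the passage from nondegeneracy to Robinson's CQ, which is standard but benefits from spelling out carefully the distinction between $T_{\K}(g(x\s))$ and its lineality space, and from exploiting the fact that $\K$ is self-dual in order to guarantee the interior-point direction demanded by Robinson's characterization.
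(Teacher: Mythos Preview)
Your proposal is correct and matches the paper's own justification essentially verbatim: the paper explicitly remarks that Theorem~\ref{classicwsoc} ``is simply a rephrasing of the necessity of BSOC after assuming uniqueness of the Lagrange multiplier (nondegeneracy),'' which is precisely the argument you sketch. The only addition you make is spelling out that nondegeneracy implies Robinson's CQ, a standard implication the paper takes for granted.
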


Note that Theorem \ref{classicwsoc} is simply a rephrasing of the necessity of BSOC after assuming uniqueness of the Lagrange multiplier (nondegeneracy), but we stated it as it is for comparison purposes since our main results consist of proving of Theorem \ref{classicwsoc} under less demanding conditions.

In the context of NLP, Andreani, Mart{\'i}nez, and Schuverdt~\cite{ams07} were able to prove Theorem~\ref{classicwsoc} replacing nondegeneracy (LICQ) with only MFCQ together with the so-called \textit{weak constant rank} (WCR) property, which holds at a feasible point $x\s$ when there exists a neighborhood $\mathcal{N}$ of $x\s$ such that 
\begin{equation}\label{nlp:wcr}
\{\nabla g_i(x)\}_{i\colon g_i(x\s)=0}\cup\{\nabla h_j(x)\}_{j\in \{1,\dots,p\}}
\end{equation}
has the same rank for every $x\in \mathcal{N}$. It is worth mentioning that WCR is not a CQ on its own~\cite[Ex. 5.1]{ams07} and that the joint condition ``MFCQ+WCR'' was proven to be strictly weaker than LICQ~\cite[Ex. 5.2]{ams07}. Later, a simpler proof of this result was presented by Andreani et al.~\cite[Crlr. 4.3 and Thm. 4.1]{akkt2}, using sequential optimality conditions. In the following sections, we generalize the WCR property and the result of~\cite{ams07} for NSOCP and NSDP, using an approach similar to~\cite{akkt2}.

As a matter of fact, Andreani, Echagüe, and Schuverdt \cite{aes2} presented a result similar to Theorem \ref{classicwsoc}, but under Janin's \textit{constant rank constraint qualification} (CRCQ)~\cite{janin}, which is also weaker than LICQ and independent of ``MFCQ+WCR''. However, extending constant rank-type CQs to conic contexts is not easy, and finding an extension that preserves all of its interesting properties is even more difficult. In fact, there is a series of papers by Andreani et al.~\cite{naive-crcq,seqcrcq,facialcrcq,seqcrcq-socp} presenting distinct extensions of CRCQ for NSDP and NSOCP that suit distinct applications. For instance,~\cite{seqcrcq,seqcrcq-socp} deal with convergence of algorithms to first-order stationary points but no second-order properties were proven, whereas~\cite{facialcrcq} presents a more geometric approach with some interesting theoretical properties but no application towards algorithms was provided. We should mention, nevertheless, that the extension of WCR presented this paper is not a particular case of any of the conditions from the aforementioned papers.


\section{Second-order cone programming}
\label{sec:socp}

The standard NSOCP problem can be seen as a particular case of \eqref{NCP} where $\E=\R^m$ and $\K\doteq  \K_{1}\times \dots \times \K_{r}$ is a Cartesian product of Lorentz cones, that is, $\K_i\doteq \Lor^{m_i}$ for all $i\in \{1,\dots,r\}$, where $m_1 + \dots + m_r=m$ and $\K_{i}\subset\R^{m_i}$. In this section, we consider $\R^m$ with its standard inner product and the Euclidean norm. The notation $w = (w_0, \bar{w})$ refers to a partition of $w\in \R^{m_i}$ where $w_0 \in \Re$ is its first entry and $\bar{w} \in \Re^{m_i-1}$ is the subvector with the remaining entries. To make the NSOCP problem explicit, define $g \doteq (g_1,\dots,g_r)$ with $g_i
\colon \Re^n \to \Re^{m_i}$ for every $i\in\{1,\dots,r\}$, and obtain

\begin{equation}
  \tag{NSOCP}
  \begin{aligned}
    & \underset{x \in \mathbb{R}^{n}}{\text{Minimize}}
    & & f(x), \\
    & \text{subject to}
    & & g_i(x) \in  \K_{i}, \forall i\in\{1,\dots,r\} \\
    & & & h(x) = 0.
  \end{aligned}
  \label{NSOCP}
\end{equation}


As usual in the study of \eqref{NSOCP}, given a feasible point $x$, we define the following sets of indices, which constitute a
partition of $\{1,\dots,r\}$:
\begin{equation}
  \label{socp:index}
  \begin{array}{r@{\:}c@{\:}l}
    I_0(x) & \doteq   & \{ i \in \{1,\dots,r\} \colon 
    g_i(x) = 0 \}, \\
    I_B(x) & \doteq   & \{ i \in \{1,\dots,r\} \colon 
    g_i(x) \in \bd^+(\K_i) \},\\
    I_I(x) & \doteq   & \{ i \in \{1,\dots,r\} \colon 
    g_i(x) \in \interior(\K_i) \}.
  \end{array}
\end{equation}

Moreover, when we are dealing with a KKT point $x\s$ associated with Lagrange multipliers $\omega\s\in\K$ and $\mu\s\in \R^p$, we consider the subset of $I_B(x\s)$ given by $$I_{BB}(x\s,\omega\s) \doteq  \{ i  \in I_B(x\s) \colon \omega_i\s \in \bd^+(\K_i) \}$$  and the critical subspace of \eqref{NSOCP} at $x\s$ can be written in terms of such indices, as follows:

\begin{equation} \label{socp:critsubspace}
\si{S(x\s)= \left\{ d \in \Re^n \colon \: Dg_i(x\s)d = 0, i \in I_0(x\s); \:\: g_i(x\s)^\T \Gamma_i Dg_i(x\s)d = 0, i \in I_B(x\s); \:\: Dh(x\s)d = 0 \right\}}
\jo{S(x\s)=\left\{ d \in \Re^n \colon \:
\begin{array}{l}
Dh(x\s)d=0; \:\: Dg_i(x\s)d = 0, i \in I_0(x\s); \\ 
g_i(x\s)^\T \Gamma_i Dg_i(x\s)d = 0, i \in I_B(x\s) 
\end{array}
\right\}},
\end{equation}
where
\begin{equation}
  \label{socp:gamma}
  \Gamma_i \doteq   
  \left[
    \begin{array}{cc}
      1 & 0^\T \\
      0 & -\I_{m_i-1}
    \end{array}
  \right] \in \Re^{m_i \times m_i}
  \quad \mbox{for all } i\in\{1,\dots,r\}.
\end{equation} 

The sigma-term at $x\s$, when specialized to \eqref{NSOCP}, can be written as
\begin{equation*}
  \label{socp:sigmafull}
\sigma(x\s,\omega\s)=\sum_{i\in I_{BB}(x\s,\omega\s)} \sigma_i(x\s,\omega\s),
\end{equation*}
where
\begin{equation}
\label{socp:sigma}
\sigma_i(x\s,\omega\s) =   
  -\frac{[\omega\s_i]_0}{[g_{i}(x\s)]_0} Dg_i(x\s)^\T \Gamma_i Dg_i(x\s), \quad \text{for all } i\in I_{BB}(x\s,\omega\s).
\end{equation}
We refer to~\cite{Fukuda2016} for details. 

Also, the specialized characterization of the nondegeneracy condition in NSOCP, following Bonnans and Ram{\'i}rez~\cite[Prop. 19]{bonnansramirez}, can be written as follows:

\begin{proposition}\label{socp:nondegen}
  Let $x\s \in \Re^n$ be a feasible point of~\eqref{NSOCP}. The nondegeneracy condition holds at
  $x\s$ if, and only if, the set
  \begin{equation}\label{socp:ndgset}
  \big\{\nabla h_{i}(x\s)\big\}_{i\in \{1,\dots,p\}}\bigcup\big\{\nabla g_{ij}(x\s)\big\}_{\substack{i\in I_0(x\s)\\ j\in \{1,\dots,m_i\}}} \bigcup
  \big\{Dg_i(x\s)^\T\Gamma_i\tilde g_i(x\s)\big\}_{i\in I_B(x\s)}
  \end{equation}
  is linearly independent, where $\nabla g_{ij}(x)$ denotes the transpose of the $j$-th row of $Dg_i(x)$ and
  \begin{equation}
    \label{socp:eq-tilde-g}
    \tilde g_i(x)\doteq  (\enorm{\overline{g_i(x)}},\overline{g_i(x)}).
  \end{equation}
\end{proposition}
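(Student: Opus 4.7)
The plan is to reduce the nondegeneracy condition \eqref{ncp:nondegen} to an equivalent dual (annihilator) statement and then to unpack the orthogonal complement of $\text{lin}(T_{\K}(g(x\s)))$ using the product structure $\K = \K_1 \times \cdots \times \K_r$. Concretely, \eqref{ncp:nondegen} is equivalent to the injectivity of the adjoint of the associated surjection: whenever $v=(v_1,\dots,v_r)\in \R^m$ with $v \in \text{lin}(T_{\K}(g(x\s)))^\perp$ and $\mu\in\R^p$ satisfy
\begin{equation*}
Dg(x\s)^\T v + Dh(x\s)^\T \mu = 0,
\end{equation*}
one must have $v=0$ and $\mu=0$. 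Since the tangent cone to a Cartesian product of cones is the product of the tangent cones, $\text{lin}(T_{\K}(g(x\s)))^\perp$ splits as $\prod_i \text{lin}(T_{\K_i}(g_i(x\s)))^\perp$, and it suffices to describe each block separately.

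Next, I would analyze the three cases coming from the partition \eqref{socp:index}. If $i\in I_I(x\s)$, then $T_{\K_i}(g_i(x\s))=\R^{m_i}$ and $v_i$ must vanish. If $i\in I_0(x\s)$, then $T_{\K_i}(0)=\K_i$ is pointed, so $v_i$ ranges freely in $\R^{m_i}$ and contributes $\sum_{j=1}^{m_i}[v_i]_j\nabla g_{ij}(x\s)$ to the adjoint equation. The key case is $i\in I_B(x\s)$: here $g_i(x\s)=\tilde g_i(x\s)$ with $[g_i(x\s)]_0=\enorm{\overline{g_i(x\s)}}>0$, so the Lorentz cone is locally defined by the smooth inequality $w_0^2-\enorm{\bar w}^2\geqslant 0$, whose gradient at $g_i(x\s)$ is a nonzero scalar multiple of $\Gamma_i\tilde g_i(x\s)$. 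Consequently, the tangent cone is the half-space $\{d\in\R^{m_i}\colon \langle \Gamma_i\tilde g_i(x\s), d\rangle \geqslant 0\}$, its lineality space is the hyperplane $\{\Gamma_i\tilde g_i(x\s)\}^\perp$, and therefore $\text{lin}(T_{\K_i}(g_i(x\s)))^\perp = \R\,\Gamma_i\tilde g_i(x\s)$; any admissible $v_i$ is of the form $\alpha_i\Gamma_i\tilde g_i(x\s)$ for some $\alpha_i\in \R$ and contributes $\alpha_i Dg_i(x\s)^\T\Gamma_i\tilde g_i(x\s)$ to $Dg(x\s)^\T v$.

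Substituting these descriptions into the adjoint equation turns it into
\begin{equation*}
\sum_{j=1}^{p}\mu_j\nabla h_j(x\s) + \sum_{i\in I_0(x\s)}\sum_{j=1}^{m_i}[v_i]_j\nabla g_{ij}(x\s) + \sum_{i\in I_B(x\s)}\alpha_i Dg_i(x\s)^\T\Gamma_i\tilde g_i(x\s) = 0,
\end{equation*}
and the requirement that its only solution be the trivial one ($\mu=0$, $v_i=0$ for $i\in I_0(x\s)$, and $\alpha_i=0$ for $i\in I_B(x\s)$) is precisely the linear independence of the family \eqref{socp:ndgset}. The only non-bookkeeping step is the computation of $\text{lin}(T_{\K_i}(g_i(x\s)))^\perp$ at a smooth boundary point of $\K_i$, which amounts to identifying $\Gamma_i\tilde g_i(x\s)$ as the outward normal to the Lorentz cone there; everything else is a translation through block decomposition and adjoints.
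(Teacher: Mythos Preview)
Your argument is correct and is essentially the standard proof of this characterization. The paper does not supply its own proof of Proposition~\ref{socp:nondegen}; it simply quotes the result from Bonnans and Ram\'irez~\cite[Prop.~19]{bonnansramirez}, so there is no in-paper argument to compare against.

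One point worth flagging: the dual reformulation you start from --- ``$Dg(x\s)^\T v + Dh(x\s)^\T\mu=0$ with $v\in\lin(T_\K(g(x\s)))^\perp$ forces $(v,\mu)=0$'' --- is the annihilator version of the \emph{standard} transversality condition
\[
\mathrm{Im}\,D(g,h)(x\s)+\bigl(\lin(T_{\K}(g(x\s)))\times\{0\}\bigr)=\E\times\R^p,
\]
which is indeed the notion used in~\cite{bonnansramirez}. If one reads the paper's display~\eqref{ncp:nondegen} literally as a Cartesian product, the $g$- and $h$-parts decouple and the resulting condition is \emph{not} equivalent to the linear independence of~\eqref{socp:ndgset} (e.g.\ take $n=p=m=1$, $\K=\R_+$, $g=h=\mathrm{id}$, $x\s=0$: the product condition holds but $\{\nabla h(x\s),\nabla g(x\s)\}=\{1,1\}$ is dependent). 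So you are implicitly --- and correctly --- working with the standard definition rather than with~\eqref{ncp:nondegen} taken at face value; it would be worth saying this explicitly in your write-up. The remaining steps (block decomposition of $\lin(T_{\K_i}(g_i(x\s)))^\perp$ according to~\eqref{socp:index}, identification of the normal direction $\Gamma_i\tilde g_i(x\s)$ at a smooth boundary point via the gradient of $w_0^2-\enorm{\bar w}^2$, and the final translation into linear independence) are all fine.
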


In~\cite[Def. 3.3]{Santos2019}, the authors extend a sequential optimality condition called \textit{Approximate-KKT} (AKKT) from NLP~\cite{ahm10} to the NSOCP context. In short, AKKT is a punctual necessary optimality condition that also incorporates a bit of local information. That is, every point $x\s$ that satisfies AKKT (though not necessarily KKT) is accompanied by a sequence $\seq{x}\to x\s$ such that each $x^k$ approximately satisfies the KKT conditions with some approximate Lagrange multipliers $\omega^k$ and $\mu^k$. Since our analyses are based on AKKT, we now recall its definition and some of its properties.

\begin{definition}[AKKT for NSOCP]\label{def:socpakkt}
A feasible point $x\s$ of \eqref{NSOCP} satisfies the AKKT condition when there exist sequences $\seq{x}\to x\s$, $\seq{\omega}\subset \K$, and $\seq{\mu}\subset \R^p$ such that 
\begin{equation}\label{socp:akktstat}
\nabla_x L(x^k,\omega^k,\mu^k)\to 0
\end{equation}
and
\begin{equation}\label{socp:akktcomp}
\begin{aligned}
i\in I_I(x\s)\Rightarrow\omega_i^k\to 0,\\
i\in I_B(x\s)\Rightarrow \omega_i^k\to 0 \text{ or } \omega_i^k\in \bd^+(\K_i) \text{ with } \frac{\bar{\omega}^k_i}{\enorm{\bar{\omega}^k_i}}\to \frac{\bar{g_i(x\s)}}{\enorm{\bar{g_i(x\s)}}}.
\end{aligned}
\end{equation}
\end{definition}

It was proved in~\cite[Thm. 3.1]{Santos2019} that AKKT is indeed a genuine necessary optimality condition independently of CQs, in contrast with KKT. Also, their proof is constructive, which means it tells us how to obtain the sequences of perturbed KKT points and multipliers. Next, we state their result with a slightly different phrasing, in order to highlight such construction.

\begin{theorem}\label{socp:minakkt}
Let $x\s$ be a local minimizer of \eqref{NSOCP}. Then, for any given sequence $\{\rho_k\}_{k\in \N}\to +\infty$, there exists a sequence $\seq{x}\to x\s$, such that each $x^k$ is a local minimizer of the regularized penalty function 
$$F_k(x) \doteq   f(x)+\frac{1}{4}\enorm{x-x\s}^4 
  + \frac{\rho_k}{2} \left( \sum_{i=1}^r \enorm{\Pi_{\K_i}(-g_i(x))}^2
  + \enorm{h(x)}^2 \right).$$
Also, the multiplier sequences given by $\omega^k_i \doteq   \rho_k \Pi_{\K_i}(-g_i(x^k))$ for all $i\in\{1,\dots,r\}$ and $\mu^k \doteq   \rho_k h(x^k)$ satisfy \eqref{socp:akktstat} and \eqref{socp:akktcomp} together with $\seq{x}$. Consequently, $x\s$ satisfies AKKT.
\end{theorem}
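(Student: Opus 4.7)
The plan is a localized quadratic-penalty argument. First, by local optimality of $x\s$, pick $\delta>0$ so that $x\s$ is a global minimizer of $f$ on the intersection of the closed ball $\bar B(x\s,\delta)$ with the feasible set of \eqref{NSOCP}. For each $k\in \N$, let $x^k\in\argmin\{F_k(x)\colon x\in \bar B(x\s,\delta)\}$, which exists by continuity of $F_k$ and compactness of the ball.

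Next, I would argue $x^k\to x\s$. The inequality $F_k(x^k)\leqslant F_k(x\s)=f(x\s)$, combined with $\rho_k\to +\infty$ and the lower boundedness of $f$ on $\bar B(x\s,\delta)$, forces both $\enorm{h(x^k)}\to 0$ and $\enorm{\Pi_{\K_i}(-g_i(x^k))}\to 0$ for every $i$, so every limit point of $\seq{x}$ is feasible. Passing to a convergent subsequence $x^k\to \bar x$ in $f(x^k)+\frac{1}{4}\enorm{x^k-x\s}^4\leqslant f(x\s)$ and invoking local optimality at $x\s$ then yields $\enorm{\bar x-x\s}^4\leqslant 0$, so $\bar x=x\s$. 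Because $x\s$ lies in $\interior(\bar B(x\s,\delta))$, for large $k$ the iterate $x^k$ is an interior, hence unconstrained, local minimizer of $F_k$, so $\nabla F_k(x^k)=0$. Applying Theorem~\ref{thm:fitz} blockwise and setting $\omega_i^k\doteq \rho_k\Pi_{\K_i}(-g_i(x^k))\in \K_i$ and $\mu^k\doteq \rho_k h(x^k)$, this identity rearranges into
\begin{equation*}
\nabla_x L(x^k,\omega^k,\mu^k) \;=\; -\enorm{x^k-x\s}^2(x^k-x\s)\;\to\; 0,
\end{equation*}
which is exactly \eqref{socp:akktstat}; the quartic regularizer was included precisely so its gradient vanishes to second order at $x\s$.

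It remains to verify the complementarity conditions \eqref{socp:akktcomp}. For $i\in I_I(x\s)$, continuity gives $g_i(x^k)\in \interior(\K_i)$ eventually, whence $\Pi_{\K_i}(-g_i(x^k))=0$ and $\omega_i^k=0$. For $i\in I_B(x\s)$ I would invoke the closed-form projection formula for $\Lor^{m_i}$ applied to $-g_i(x^k)$: either $-g_i(x^k)\in -\K_i$, which gives $\omega_i^k=0$; or $\omega_i^k\in \bd^+(\K_i)$ with its non-leading component proportional to that of $-g_i(x^k)$, in which case the normalization of $\bar\omega_i^k$ converges by continuity to the unit direction prescribed by $g_i(x\s)\in \bd^+(\K_i)$. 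I expect the main technical care to lie in this last case analysis, particularly in handling subsequences where $x^k$ oscillates between the two projection regimes on a given block; this is accommodated by the disjunctive form of \eqref{socp:akktcomp}, since both branches produce multipliers of one of its two prescribed types. The remaining steps are the standard lower-bound/outer-penalty calculations.
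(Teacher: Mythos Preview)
Your argument is correct and follows the standard localized quadratic-penalty scheme: minimize $F_k$ over a small closed ball around $x\s$, use $F_k(x^k)\leqslant F_k(x\s)=f(x\s)$ together with $\rho_k\to+\infty$ to force feasibility of limit points, invoke the quartic regularizer to identify the limit as $x\s$, and then read off \eqref{socp:akktstat} from $\nabla F_k(x^k)=0$ and \eqref{socp:akktcomp} from the closed-form projection onto $\Lor^{m_i}$. Your treatment of the oscillation between the two projection regimes for $i\in I_B(x\s)$ is also handled the right way, and indeed the paper itself later splits into exactly those subcases (indexed $N_1,N_2,N_3$) in the proof of Theorem~\ref{socp:minwsoc}.

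As for the comparison: the paper does not give its own proof of this theorem. It states the result as a rephrasing of \cite[Thm.~3.1]{Santos2019} and refers the reader there; the text preceding the statement explicitly says ``we state their result with a slightly different phrasing, in order to highlight such construction.'' Your proposal is precisely the argument one expects to find in that reference, so there is no methodological divergence to discuss.
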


A key property of AKKT, as stated in~\cite[Thm. 3.1]{Santos2019}, is that the sequences of multipliers from Definition~\ref{def:socpakkt} must be bounded when $x\s$ satisfies Robinson's CQ. Hence, AKKT implies KKT under Robinson's CQ. Also, in the same paper the authors present a variant of the classical \textit{Powell-Hestenes-Rockafellar} (PHR) Augmented Lagrangian method (see~\cite{Hestenes,Powell,Rockafellar}) and prove that its output sequences can be fully described by AKKT. 


\subsection{Second-order optimality conditions}
\label{sec:akkt2}

Here, we build second-order analysis for \eqref{NCP} primarily under Robinson's CQ instead of nondegeneracy and strict complementarity, but since Robinson's CQ alone is not enough to complete that task~\cite{Baccari2004}, we also introduce a generalized version of the WCR property.

\begin{definition}[WCR for NSOCP]\label{socp:wcrdef}
  \si{Let $x\s \in \Re^n$ be a feasible point of~\eqref{NSOCP}. }We say
  that the \emph{weak constant rank} property is satisfied at \jo{ a feasible point }$x\s$\jo{ of \eqref{NSOCP} } if there exists a neighborhood $\mathcal{N}$ of $x\s$ such that the set
  \begin{equation}\label{socp:wcrset}
  \big\{\nabla h_{i}(x)\big\}_{i\in \{1,\dots,p\}}\bigcup\big\{\nabla g_{ij}(x)\big\}_{\substack{i\in I_0(x\s)\\ j\in \{1,\dots,m_i\}}} \bigcup
  \big\{Dg_i(x)^\T\Gamma_i\tilde g_i(x)\big\}_{i\in I_B(x\s)}
  \end{equation}
  has the same rank, for all $x \in \mathcal{N}$.
\end{definition}

In view of the characterization of nondegeneracy for NSOCP provided by Proposition \ref{socp:nondegen}, we see that nondegeneracy implies both Robinson's CQ and WCR in this context, just as in the NLP case. On the other hand, \cite[Ex. 5.2]{ams07} exhibits a point that satisfies MFCQ and WCR, but not LICQ. Hence, the joint condition ``Robinson's CQ+WCR'' is strictly weaker than nondegeneracy. 

The main feature of the WCR property in NLP is its effect on the continuity of perturbations of the critical subspace around a feasible point $x\s$. Next, we prove that this property is maintained in \eqref{NSOCP}.  

\begin{lemma}
  \label{socp:wcr-inner}
  Let $x\s \in \Re^n$ be a feasible point of~\eqref{NSOCP}. Then, the
  WCR property holds at $x\s$ if, and only if, the set-valued mapping $x\mapsto
  S(x,x\s)$ is inner semicontinuous at $x\s$, where
  \begin{equation}
    \label{socp:pert-subspace}
    \si{S(x,x\s) \doteq   \left\{ d \in \Re^n \colon \: Dh(x)d = 0; \:\: \forall i \in I_0(x\s),Dg_i(x)d = 0; \:\:
      \forall i \in I_B(x\s),\tilde{g}_i(x)^\T \Gamma_i Dg_i(x)d = 0\right\},}
      \jo{S(x,x\s) \doteq \left\{ d \in \Re^n \colon \: 
      \begin{array}{l}
      Dh(x)d = 0; \:\: \forall i \in I_0(x\s),Dg_i(x)d = 0; \\
      \forall i \in I_B(x\s),\tilde{g}_i(x)^\T \Gamma_i Dg_i(x)d = 0
	  \end{array}      
      \right\},}
  \end{equation}
  and $\tilde{g}_i$ is defined in~\eqref{socp:eq-tilde-g}.
\end{lemma}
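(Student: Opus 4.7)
The plan is to recognize that $S(x,x^*)$ is precisely the kernel of the matrix $A(x)$ whose rows are the vectors listed in~\eqref{socp:wcrset} (with the $\nabla g_{ij}(x)^\T$ and $\nabla h_i(x)^\T$ treated as row vectors, and each $Dg_i(x)^\T\Gamma_i\tilde g_i(x)$ treated as one row). With this identification, WCR at $x^*$ is the statement that $\mathrm{rank}(A(x))$ is locally constant, and the lemma becomes the classical fact that for a continuous matrix-valued map, constancy of the rank is equivalent to inner semicontinuity of the kernel. Before invoking it, I would justify that $A$ is indeed continuous on a neighborhood of $x^*$: the rows involving $\nabla h_i$ and $\nabla g_{ij}$ are continuous because $g,h\in C^2$; for $i\in I_B(x^*)$ we have $g_i(x^*)\in\bd^+(\K_i)$, hence $[g_i(x^*)]_0=\enorm{\overline{g_i(x^*)}}>0$, so $\overline{g_i(x)}\neq 0$ in a neighborhood of $x^*$ and $\tilde g_i(x)=(\enorm{\overline{g_i(x)}},\overline{g_i(x)})$ is (even continuously differentiable) there.

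For the forward direction, assume WCR. I would deduce inner semicontinuity through the orthogonal projector: under constant rank, the projector $P_{\ker A(x)}$ depends continuously on $x$ at $x^*$ (a standard consequence of the continuity of the Moore--Penrose pseudoinverse under constant rank, giving $P_{\ker A(x)}=I-A(x)^+A(x)$). Then for any $d^*\in S(x^*,x^*)$ and any $x^k\to x^*$, the sequence $d^k\doteq P_{\ker A(x^k)}(d^*)\in S(x^k,x^*)$ converges to $P_{\ker A(x^*)}(d^*)=d^*$, which is exactly inner semicontinuity.

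For the reverse direction, I would proceed by contrapositive: suppose WCR fails, so there is a sequence $x^k\to x^*$ along which $\mathrm{rank}(A(x^k))\neq\mathrm{rank}(A(x^*))$. Since rank is lower semicontinuous for continuous matrix maps, one has $\mathrm{rank}(A(x^k))\geqslant\mathrm{rank}(A(x^*))$ for $k$ large, and thus strict inequality on a subsequence, which yields $\dim S(x^k,x^*)<\dim S(x^*,x^*)$. On the other hand, if inner semicontinuity held, taking a basis $d^1,\dots,d^\ell$ of $S(x^*,x^*)$ and approximating each $d^j$ by $d^{j,k}\in S(x^k,x^*)$, linear independence is preserved in the limit, so $\dim S(x^k,x^*)\geqslant \ell=\dim S(x^*,x^*)$ for large $k$, a contradiction.

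The main obstacle I foresee is simply packaging the classical rank/kernel continuity cleanly in this mixed setting where one row of $A$ is nonlinear in the derivatives of $g_i$ through $\tilde g_i$; once the continuity of $A$ near $x^*$ is secured using that $i\in I_B(x^*)$ forces $\overline{g_i(x^*)}\neq 0$, the argument reduces to the standard linear-algebraic equivalence, which is routine.
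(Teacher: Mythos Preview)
Your argument is correct and complete: identifying $S(x,x\s)$ with $\ker A(x)$, noting continuity of $A$ near $x\s$ (including the nonlinear row $Dg_i(x)^\T\Gamma_i\tilde g_i(x)$ for $i\in I_B(x\s)$, where $\overline{g_i(x\s)}\neq 0$), and then running the constant-rank/kernel equivalence is entirely sound. The forward direction via continuity of $A^+$ under constant rank and the backward direction via lower semicontinuity of rank plus the dimension count are both standard and correctly applied.

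Your route is genuinely different from the paper's. The paper dualizes: it invokes a result of Aubin--Frankowska stating that inner semicontinuity of $x\mapsto S(x,x\s)$ is equivalent to outer semicontinuity of the polar map $x\mapsto S(x,x\s)\pol$, identifies $S(x,x\s)\pol$ explicitly as the span of the vectors in~\eqref{socp:wcrset}, and then cites a proposition from Facchinei--Pang asserting that such a span map is outer semicontinuous precisely when the generating family has locally constant rank. So the paper's proof is essentially two citations glued by the computation of the polar. Your approach, by contrast, is self-contained linear algebra on the primal side: you work directly with the kernel, produce the approximating sequence $d^k=P_{\ker A(x^k)}d\s$ explicitly, and handle the converse by a bare-hands rank/dimension argument. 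The paper's version is shorter on the page but leans on external references; yours is more elementary and, usefully, hands you a concrete recipe for the sequences $d^k\to d$ that are later needed in the proof of Theorem~\ref{socp:minwsoc}.
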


\begin{proof}
Following the steps of the proof of~\cite[Prop. 2]{WCR}, we see that \cite[Thm. 1.1.8]{aubinf} tells us that $x\mapsto S(x,x\s)$ is inner semicontinuous at $x\s$ if, and only if, the set-valued mapping $x\mapsto S(x,x\s)\pol$ is outer semicontinuous at $x\s$, where $S(x,x\s)\pol\doteq -S(x,x\s)^*$ denotes the \textit{polar} of $S(x,x\s)$. Since in this case we have 
\begin{equation*}\si{S(x,x\s)\pol=\left\{\sum_{i\in I_0(x\s)} Dg_{i}(x)^\T a_{i} + \sum_{i=1}^p \nabla h_i(x) b_i + \sum_{i\in I_B(x\s)} Dg_i(x)^\T\Gamma_i\tilde g_i(x) c_i \ \colon \ a_{i}\in \R^{m_i} , \ b_i,c_i\in \R\right\},}
\jo{S(x,x\s)\pol=\left\{
\begin{aligned}
\sum_{i\in I_0(x\s)} Dg_{i}(x)^\T a_{i} + \sum_{i=1}^p \nabla h_i(x) b_i +\\
+\sum_{i\in I_B(x\s)} Dg_i(x)^\T\Gamma_i\tilde g_i(x) c_i\\
\end{aligned}
\ \colon \ a_{i}\in \R^{m_i} , \ b_i,c_i\in \R\right\},}
\end{equation*}
the result follows directly from~\cite[Prop. 3.2.9]{fachpang}.   
\end{proof}

As in NLP, the subspace $S(x,x\s)$ may be called \textit{perturbed critical subspace} of \eqref{NSOCP} at $x$, around $x\s$. 
The last ingredient we need for the main theorem of this section is an explicit characterization of the subdifferential of the projection onto $\K_i$. In order to present that, 
for each $i\in\{1,\dots,r\}$, let $M_i \colon \Re \times \Re^{m_i-1} \to
  \Re^{m_i \times m_i}$ be defined as
  \begin{displaymath}
    M_i(\xi,w) \doteq  
    \frac{1}{2}
    \left[
      \begin{array}{cc}
        1 &  w^\T \\
        w & (1+\xi) \I_{m_i-1} - \xi w w^\T \\
      \end{array}
    \right]
  \end{displaymath}
and observe that the matrix $M_i(\xi,u)$ is symmetric positive
semidefinite whenever $\vert\xi\vert \le 1$ and $\enorm{w} = 1$ \cite[Lem. 2.8]{KFF09}. 

The following lemma, that can be found in~\cite[Lem. 14]{PSS03} and~\cite[Prop. 4.8]{HYF05}, provides a description of the B-subdifferential of the projection onto $\K_i$, in terms of $M_i(\xi,w)$.

\begin{lemma}
  \label{socp:lemma-subdiffproj}
  
  The B-subdifferential $\partial_B \Pi_{\K_i} (z)$ of the orthogonal projection onto $\K_i$ at $z\in \R^{m_i}$ is given as follows:
  \begin{itemize}
  \item[(a)] If $z \in \interior(-\K_{i})$, then $\partial_B
    \Pi_{\K_i}(z) = \big\{ 0 \big\}$;
  \item[(b)] If $z \in \interior(\K_{i})$, then $\partial_B
    \Pi_{\K_i}(z) = \big\{ \I_{m_i} \big\}$;
  \item[(c)] If $z \notin \K_i \cup (-\K_{i})$, then
    $\displaystyle{\partial_B \Pi_{\K_i}(z) = \left\{ M_i \left(
          \frac{z_0}{\enorm{\bar{z}}}, \frac{\bar{z}}{\enorm{\bar{z}}}
        \right) \right\} }$;
  \item[(d)] If $z \in \bd^+(\K_{i})$, then
    $\displaystyle{\partial_B \Pi_{\K_i}(z) = \left\{ \I_{m_i}, M_i
        \left( 1, \frac{\bar{z}}{\enorm{\bar{z}}} \right) \right\}}$;
  \item[(e)] If $z \in \bd^+(-\K_{i})$, then
    $\displaystyle{\partial_B \Pi_{\K_i}(z) = \left\{ 0, M_i \left( -1,
          \frac{\bar{z}}{\enorm{\bar{z}}} \right) \right\}}$;
  \item[(f)] If $z = 0$, then $\displaystyle{\partial_B \Pi_{\K_i}(z) =
      \{ 0, \I_{m_i} \} \cup \{ M_i(\xi,w) \colon \vert\xi\vert \le 1, \enorm{w}
      = 1 \}}$.
  \end{itemize}
\end{lemma}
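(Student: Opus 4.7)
The plan is to work from the explicit formula for the projection onto $\K_i$, identify where it is differentiable, compute the derivative there, and then obtain each B-subdifferential by a limit argument. Recall that the projection has the closed form
\[
\Pi_{\K_i}(z) = \begin{cases} z, & z \in \K_i, \\ 0, & z \in -\K_i, \\ \tfrac{1}{2}(z_0 + \enorm{\bar{z}}) \left( 1, \bar{z}/\enorm{\bar{z}} \right), & z \notin \K_i \cup (-\K_i). \end{cases}
\]
So $\Pi_{\K_i}$ is real-analytic on the three open regions $\interior(\K_i)$, $\interior(-\K_i)$, and $\R^{m_i}\setminus(\K_i \cup (-\K_i))$, with respective derivatives $\I_{m_i}$, $0$, and (by direct differentiation of the third branch, collecting terms in the outer-product basis $\{\bar{z}\bar{z}^\T, \I_{m_i-1}\}$) the matrix $M_i(z_0/\enorm{\bar z}, \bar z/\enorm{\bar z})$. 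The non-differentiability locus is precisely $\bd^+(\K_i)\cup \bd^+(-\K_i)\cup\{0\}$, where the three branches meet.

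With this in hand, cases (a), (b), (c) are immediate: the point lies in an open set on which $\Pi_{\K_i}$ is smooth, so by definition $\partial_B \Pi_{\K_i}(z)$ is the singleton $\{D\Pi_{\K_i}(z)\}$ in each case. For case (d), take $z \in \bd^+(\K_i)$, i.e.\ $z_0 = \enorm{\bar z} > 0$; approaching $z$ through $\interior(\K_i)$ yields the limit $\I_{m_i}$, while approaching through the exterior yields $M_i(1,\bar z/\enorm{\bar z})$ by continuity of $M_i$ and the fact that $z^k_0/\enorm{\overline{z^k}} \to 1$. Only these two limits can arise, since $z$ cannot be approached through $\interior(-\K_i)$ in a way that keeps $\enorm{z}$ bounded away from $0$. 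Case (e) is entirely analogous: approaching through $\interior(-\K_i)$ gives $0$ and through the exterior gives $M_i(-1,\bar z/\enorm{\bar z})$, using that $z^k_0/\enorm{\overline{z^k}} \to -1$.

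The main technical step is case (f), $z = 0$, because sequences in $\mathcal{D}(\Pi_{\K_i})$ can approach the origin from any of the three regions. From $\interior(\K_i)$ and $\interior(-\K_i)$ we pick up $\I_{m_i}$ and $0$ respectively. The interesting contribution comes from the exterior: for any fixed $\xi\in[-1,1]$ and any unit vector $w \in \R^{m_i-1}$, I would pick $z^k = t_k(\xi, w)$ with $t_k \downarrow 0$ and $|\xi|<1$ to stay in the exterior, giving derivative $M_i(\xi, w)$ for all $k$, hence the limit $M_i(\xi,w)$ lies in $\partial_B\Pi_{\K_i}(0)$. The endpoint values $\xi = \pm 1$ are recovered as limits of such sequences (or directly via case (d)/(e) limits specialized to the origin by an additional diagonal argument). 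Conversely, any limit of derivatives taken in the exterior region must be of the form $M_i(\xi,w)$ with $|\xi|\leq 1$ and $\enorm{w}=1$, since $|z_0/\enorm{\bar z}|$ is bounded on the exterior and $\bar z/\enorm{\bar z}$ has unit norm. Combining the three types of limits gives exactly the set stated in (f). The only real obstacle throughout the argument is the bookkeeping of which $M_i$ values are achievable as limits at the origin; once one allows the full range $|\xi|\leq 1$ via the explicit sequences above, the characterization is complete.
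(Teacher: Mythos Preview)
Your argument is correct. The paper does not actually prove this lemma; it simply cites it as a known result from \cite[Lem.~14]{PSS03} and \cite[Prop.~4.8]{HYF05}. Your direct approach---writing down the piecewise-analytic formula for $\Pi_{\K_i}$, differentiating on each of the three open regions, and then collecting all possible limits of Jacobians at the boundary pieces $\bd^+(\K_i)$, $\bd^+(-\K_i)$, and $\{0\}$---is precisely the standard computation behind those references, so in substance you are reproducing the proof that the paper outsources. The only places where your write-up is slightly terse are the verification that the Jacobian in the exterior region really equals $M_i(z_0/\enorm{\bar z},\bar z/\enorm{\bar z})$ (you gesture at ``collecting terms in the outer-product basis'' but do not display the calculation) and, in case~(d), the exclusion of limits from $\interior(-\K_i)$, which follows more cleanly from the observation that $\bd^+(\K_i)$ and $-\K_i$ are disjoint closed sets than from your remark about $\enorm{z}$ being bounded away from zero. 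Neither of these is a gap; the argument goes through as written.
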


To the best of our knowledge, the first specialized study on second-order necessary conditions for \eqref{NSOCP} is credited to Bonnans and Ram{\'i}rez~\cite[Thm. 30]{bonnansramirez}, where they assume nondegeneracy and the so-called \textit{second-order growth condition} (or \textit{uniform growth condition}). Fukuda and Fukushima~\cite[Thm. 4.5]{Fukuda2016} also developed second-order conditions via squared slack variables, under nondegeneracy and strict complementarity. Our contribution to this discussion is to draw attention to the fact that the nondegeneracy assumption can be strictly weakened and that strict complementarity is not necessary when considering WSOC, which is also the main result of this section.

\begin{theorem}\label{socp:minwsoc}
Let $x\s$ be a local minimizer of \eqref{NSOCP} satisfying Robinson's CQ and the WCR property. Then, there are some Lagrange multipliers $\omega\s\in \K$ and $\mu\s\in \R^p$ such that the KKT conditions and WSOC hold.
\end{theorem}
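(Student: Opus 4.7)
The plan is to adapt the penalization strategy of~\cite{akkt2} to the second-order cone setting. First, I invoke Theorem~\ref{socp:minakkt} with some sequence $\{\rho_k\}_{k\in\N}\to +\infty$ to obtain $\{x^k\}_{k\in\N}\to x\s$ such that each $x^k$ is an unconstrained local minimizer of the smooth penalty $F_k$, together with the explicit multiplier sequences $\omega_i^k \doteq \rho_k\Pi_{\K_i}(-g_i(x^k))$ and $\mu^k \doteq \rho_k h(x^k)$ satisfying the AKKT condition at $x\s$. Since Robinson's CQ is assumed, these sequences are bounded, hence along a subsequence $\omega^k\to \omega\s\in\K$ and $\mu^k\to \mu\s\in\R^p$, yielding Lagrange multipliers that certify the KKT conditions at $x\s$.

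Next, since $\nabla F_k$ is locally Lipschitz (Theorem~\ref{thm:fitz}), Theorem~\ref{thm:nonsmooth2nd} applied to the unconstrained local minimizer $x^k$ of $F_k$ yields, for every $\tilde d\in \R^n$, an element $M_k(\tilde d)\in \partial^2 F_k(x^k)$ with $\tilde d^\T M_k(\tilde d)\tilde d\geqslant 0$. Fix an arbitrary $d\in S(x\s)$. By the WCR property together with Lemma~\ref{socp:wcr-inner}, the set-valued mapping $x\mapsto S(x,x\s)$ is inner semicontinuous at $x\s$, so I can pick a sequence $d^k\to d$ with $d^k\in S(x^k,x\s)$. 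The rest of the argument consists in setting $\tilde d=d^k$ and passing to the limit in $(d^k)^\T M_k(d^k) d^k\geqslant 0$.

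The core computation expands $M_k(d^k)$ via the chain rule of Theorem~\ref{thm:chainrule} applied to each composition $\Pi_{\K_i}\circ(-g_i)$, together with Lemma~\ref{socp:lemma-subdiffproj}, yielding an expression of the form
\[
M_k(d^k) = \nabla_x^2 L(x^k,\omega^k,\mu^k) + R_k + \rho_k Dh(x^k)^\T Dh(x^k) + \rho_k\sum_{i=1}^r Dg_i(x^k)^\T V_i^k Dg_i(x^k),
\]
where $R_k$ is the harmless Hessian of the quartic regularizer and $V_i^k\in \partial_B\Pi_{\K_i}(-g_i(x^k))$ is selected according to the appropriate case of the lemma. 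Evaluating at $d^k\in S(x^k,x\s)$ kills $Dh(x^k)d^k$ and $Dg_i(x^k)d^k$ for $i\in I_0(x\s)$; case (a) of the lemma kills $V_i^k$ for $i\in I_I(x\s)$ at large $k$. For $i\in I_B(x\s)$ with $\omega_i^k\to 0$, the remaining contributions either vanish in the limit or are non-negative and can be dropped while preserving the inequality. For $i\in I_B(x\s)$ such that $\omega_i^k$ eventually lies in $\bd^+(\K_i)$ (which corresponds to $i\in I_{BB}(x\s,\omega\s)$ at the limit), the relevant subdifferential matrix is one of the $M_i(\xi_i^k,w_i^k)$ blocks; combining the projection identity $\omega_i^k=\rho_k\Pi_{\K_i}(-g_i(x^k))$ with the constraint $\tilde g_i(x^k)^\T\Gamma_i Dg_i(x^k)d^k=0$ built into $S(x^k,x\s)$ reduces the corresponding term to $(d^k)^\T\sigma_i(x^k,\omega^k)d^k$ as in~\eqref{socp:sigma}.

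The main obstacle is precisely this last algebraic identification: the block $M_i(\xi,w)$ has a complicated structure, and one must show that after pairing with $\omega_i^k$ and using the specific constraint $\tilde g_i(x^k)^\T\Gamma_i Dg_i(x^k)d^k=0$, the characteristic ratio $[\omega_i^k]_0/[g_i(x^k)]_0$ and the projector $\Gamma_i$ that define $\sigma_i$ emerge cleanly; a parallel identification was carried out in the smooth-slack formulation of~\cite{Fukuda2016} and must be transcribed into nonsmooth form here. Once this is handled, passing $k\to \infty$ using continuity of the smooth data together with the AKKT convergence properties of $\omega^k$ (in particular $\omega_i^k\to 0$ for $i\in I_B(x\s)\setminus I_{BB}(x\s,\omega\s)$, so those contributions disappear in the limit) produces $d^\T(\nabla_x^2 L(x\s,\omega\s,\mu\s)+\sigma(x\s,\omega\s))d\geqslant 0$. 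Since $d\in S(x\s)$ was arbitrary, WSOC holds at $x\s$ with respect to $(\omega\s,\mu\s)$.
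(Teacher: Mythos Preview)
Your overall plan mirrors the paper's proof: penalize via Theorem~\ref{socp:minakkt}, extract bounded multipliers via Robinson's CQ, invoke Theorem~\ref{thm:nonsmooth2nd}, approximate each $d\in S(x\s)$ by $d^k\in S(x^k,x\s)$ via WCR and Lemma~\ref{socp:wcr-inner}, then analyze the generalized Hessian term by term.

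There is, however, a gap in your treatment of indices $i\in I_B(x\s)$. You split on the behavior of $\omega_i^k$, but the alternatives ``$\omega_i^k\to 0$'' and ``$\omega_i^k$ eventually in $\bd^+(\K_i)$'' are not mutually exclusive: by construction $\omega_i^k=\rho_k\Pi_{\K_i}(-g_i(x^k))$ is \emph{always} either zero or in $\bd^+(\K_i)$, so the second alternative does not isolate $I_{BB}(x\s,\omega\s)$. More seriously, your claim that when $\omega_i^k\to 0$ the contribution $\rho_k(Dg_i(x^k)d^k)^\T V_i^k(Dg_i(x^k)d^k)$ ``is non-negative and can be dropped while preserving the inequality'' runs in the wrong direction: the working inequality has the form $(d^k)^\T\nabla_x^2 L(x^k,\omega^k,\mu^k)d^k+\rho_k\sum_i(\cdots)\geqslant -(d^k)^\T\Delta^k d^k$, and removing a non-negative summand from the \emph{left} side weakens, rather than preserves, the bound you need for the Lagrangian part alone. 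What must actually be shown is that each such term converges (to the corresponding $\sigma_i$ contribution, which is zero when $\omega_i\s=0$). The paper does this by splitting instead on the location of $g_i(x^k)$ relative to $\K_i$ (interior, $\bd^+(\K_i)$, or outside $\K_i\cup(-\K_i)$), computing $\rho_k(u_i^k)^\T V_i^k u_i^k$ explicitly in each sub-case via Lemma~\ref{socp:lemma-subdiffproj} together with the constraint $\tilde g_i(x^k)^\T\Gamma_i u_i^k=0$ furnished by $d^k\in S(x^k,x\s)$, and obtaining in every case (up to a factor in $[0,1]$) the closed form $-[\omega_i^k]_0\,\enorm{\overline{g_i(x^k)}}^{-1}(u_i^k)^\T\Gamma_i u_i^k$, which then manifestly tends to $d^\T\sigma_i(x\s,\omega\s)d$ regardless of whether $\omega_i\s=0$. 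In short, the algebraic identification you flag as the ``main obstacle'' for $I_{BB}$ must be carried out uniformly across all of $I_B(x\s)$.
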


\begin{proof}
Let $x\s$ be a local minimizer of \eqref{NSOCP}. Then, by Theorem~\ref{socp:minakkt}, for any given $\{\rho_k\}_{k\in \N}\to +\infty$, there exists a sequence $\seq{x}\to x\s$ such that $x^k$ is a local minimizer of $F_k(x)$ for each~$k$, where
  \[
  F_k(x) =   f(x)+\frac{1}{4}\enorm{x-x\s}^4 
  + \frac{\rho_k}{2} \left( \sum_{i=1}^r \enorm{\Pi_{\K_i}(-g_i(x))}^2
  + \enorm{h(x)}^2 \right).
  \]
  From the local optimality of $x^k$, we obtain
  \[
  \si{\nabla F_k(x^k) = \nabla f(x^k)+\enorm{x^k-x\s}^2(x^k-x\s)
  - \sum_{i=1}^rDg_i(x^k)^\T\rho_k\Pi_{\K_i}(-g_i(x^k))
   +\rho_k Dh(x^k)^\T h(x^k)=0}
  \jo{\begin{aligned}
  \nabla F_k(x^k) = \nabla f(x^k)+\enorm{x^k-x\s}^2(x^k-x\s)
  - \hspace{4cm}\\ -\sum_{i=1}^rDg_i(x^k)^\T\rho_k\Pi_{\K_i}(-g_i(x^k))
   +\rho_k Dh(x^k)^\T h(x^k)=0
  \end{aligned}}
  \]
  and by Theorem~\ref{thm:nonsmooth2nd}, for every $d\in \R^n$ and every $i\in\{1,\dots,r\}$, there exists some $\chi_i^k\in \partial(\Pi_{\K_i}\circ -g_i)(x^k)$ such that $d^\T \nabla^2F_k(x^k) d\geqslant 0,$ where we denote by $\nabla^2F_k(x^k)$ the element of the generalized Hessian of $F_k$ at $x^k$ that is defined in terms of $\chi_i^k$, by an abuse of notation. That is,
  
  \begin{eqnarray*}
  \nabla^2F_k(x^k) & \doteq & \nabla^2f(x^k)+\enorm{x^k-x\s}^2\,\I_n + 2(x^k-x\s)(x^k-x\s)^\T \\
  & & {} - \sum_{i=1}^r \Bigg( \sum_{j=1}^{m_i}(\rho_k\Pi_{\K_i}(-g_i(x^k)))_j\nabla^2g_{ij}(x^k)
    -\rho_kDg_i(x^k)^\T \chi_i^k \Bigg) \\
  & & {} + \sum_{j=1}^p \left( \rho_k h_j(x^k) \grad^2 h_j(x^k) 
    + \rho_k \grad h_j(x^k) \grad h_j(x^k)^\T \right).
  \end{eqnarray*}
  
Following Theorem~\ref{socp:minakkt}, we define $\omega^k_i \doteq   \rho_k \Pi_{\K_i}(-g_i(x^k))$ for all $i\in\{1,\dots,r\}$, and $\mu^k \doteq   \rho_k h(x^k)$ for every $k\in \N$, which satisfy \eqref{socp:akktstat}. Also, it follows from Theorem~\ref{thm:chainrule}  that there exists some $V^{k}_i\in\partial\Pi_{\K_i}(-g_i(x^k))$ such that \jo{$}$\chi_i^k = V^k_i \circ -Dg_i(x^k)=-V^k_i Dg_i(x^k),$\jo{$} where $\circ$ denotes a composition of linear operators. Hence, the expression $d^\T \nabla^2 F_k(x^k)d\geqslant 0$ can be rewritten as
  \begin{equation}\label{socp:akkt2ex}
  \si{d^\T\left(\nabla_x^2L(x^k,\omega^k,\mu^k)+\rho_k\sum_{i=1}^r Dg_i(x^k)^\T
  V^{k}_iDg_i(x^k) +\rho_k\sum_{j=1}^p \grad
      h_j(x^k) \grad h_j(x^k)^\T\right)d \geqslant -d^\T\Delta^k d,}
  \jo{\begin{aligned}
  d^\T\Bigg( \nabla_x^2L(x^k,\omega^k,\mu^k)+\rho_k\sum_{i=1}^r Dg_i(x^k)^\T
  V^{k}_iDg_i(x^k) +\Bigg. \hspace{3cm} \\ \Bigg.+\rho_k\sum_{j=1}^p \grad
      h_j(x^k) \grad h_j(x^k)^\T \Bigg)d \geqslant -d^\T\Delta^k d,
      \end{aligned}}
  \end{equation}
  where $\Delta^k\doteq  \enorm{x^k-x\s}^2\I_n + 2(x^k-x\s)(x^k-x\s)^\T \to0$.
  
Under Robinson's CQ, the sequence $\{(\omega^k,\mu^k)\}_{k\in \N}$ is bounded (see the proof of~\cite[Thm. 3.3]{Santos2019}). Then, for every limit point $(\omega\s,\mu\s)$ of $\{(\omega^k,\mu^k)\}_{k\in \N}$, note that $x\s$ satisfies the KKT conditions. Without loss of generality, we assume $\{(\omega^k,\mu^k)\}_{k\in \N} \to (\omega\s,\mu\s)$. Now, from WCR and Lemma~\ref{socp:wcr-inner}, we know that the mapping $x\mapsto S(x,x\s)$ as in~\eqref{socp:pert-subspace} is inner semicontinuous at $x\s$, then for each $d\in S(x\s)$ there exists a sequence $\seq{d}\to d$ such that $d^k\in S(x^k,x\s)$ for all $k\in \N$. 
  
  For each $i\in\{1,\dots,r\}$, define
  \[
  u_i^k=\Big([u_i^k]_0,\overline{u_i^k}\Big) \doteq   Dg_i(x^k)d^k.
  \]
Our next step is to compute $\rho_k(u^k_i)^\T V^{k}_i u^k_i$ and its limit points in three independent cases:
\begin{enumerate}  
 \item If $i\in I_I(x\s)$, we have $g_i(x^k)\in \interior(\K_i)$ for all $k$
  sufficiently large. Then, from
  Lemma~\ref{socp:lemma-subdiffproj} item (a), $V_i^{k}=0$ and $\rho_k(u^k_i)^\T V^{k}_i u^k_i= 0$ for such $k$;
  
  \item If $i\in I_0(x\s)$, recalling that $d^k \in S(x^k,x\s)$, we have
  $u_i^k=0$ for all $k\in \N$, which means $\rho_k(u^k_i)^\T V^{k}_i u^k_i=0$ in this case as well;
  
  \item If $i\in I_B(x\s)$, the sequence $\{g_i(x^k)\}_{k\in \N}$ can be essentially split into three subsequences, which have distinct influences over $\rho_k(u^k_i)^\T V^{k}_i u^k_i$. Hence, they are separately analysed below, where $N_1, N_2$, and $N_3$ constitute a partition of $\N$:
  
\begin{enumerate}
\item [(i)] $\{g_i(x^k)\}_{k\in N_1}\subset \interior(\K_i)$. Here, $\omega^k_i=0$ for every $k\in N_1$. Also, by item (a) of Lemma~\ref{socp:lemma-subdiffproj}, $V^k_i=0$ and $\rho_k(u^k_i)^\T V^{k}_i u^k_i=0$ for every $k\in N_1$;

\item [(ii)] $\{g_i(x^k)\}_{k\in N_2}\subset \R^m\setminus (\K_i\cup -\K_i)$. From Lemma~\ref{socp:lemma-subdiffproj} item (c) we obtain
  \[V_i^{k}=M_i\left(-\frac{[g_i(x^k)]_0}{\enorm{\overline{g_i(x^k)}}},
    -\frac{\overline{g_i(x^k)}}{\enorm{\overline{g_i(x^k)}}}\right)
  \] which can be explicitly written as $$V_i^{k}=\displaystyle{\frac{1}{2}}\left[
      \begin{array}{cc}
        1 & \displaystyle{-\frac{\overline{g_i(x^k)}^\T}{\enorm{\overline{g_i(x^k)}}}} \\[10pt] 
        \displaystyle{-\frac{\overline{g_i(x^k)}}{\enorm{\overline{g_i(x^k)}}}} & 
        \si{\displaystyle{\bigg(1-\frac{[g_i(x^k)]_0}{\enorm{\overline{g_i(x^k)}}}\bigg)\I_{m_i-1}
          +\bigg(\frac{[g_i(x^k)]_0}{\enorm{\overline{g_i(x^k)}}}\bigg)
          \frac{\overline{g_i(x^k)}\:\:\overline{g_i(x^k)}^\T}{\enorm{\overline{g_i(x^k)}}^2}}}
        \jo{Z^k}
      \end{array}\right],$$
      \jo{where $$Z^k\doteq \displaystyle{\bigg(1-\frac{[g_i(x^k)]_0}{\enorm{\overline{g_i(x^k)}}}\bigg)\I_{m_i-1}
          +\bigg(\frac{[g_i(x^k)]_0}{\enorm{\overline{g_i(x^k)}}}\bigg)
          \frac{\overline{g_i(x^k)}\:\:\overline{g_i(x^k)}^\T}{\enorm{\overline{g_i(x^k)}}^2}},$$}
     and it is elementary to see that
    \begin{equation*}
    \si{(u_i^k)^\T V^{k}_iu_i^k = \displaystyle{\frac{1}{2}\Bigg([u_i^k]_0^2-
      \frac{2[u_i^k]_0\overline{g_i(x^k)}^\T\bar{u_i^k}}{\enorm{\overline{g_i(x^k)}}}
      + \bigg(1-\frac{[g_i(x^k)]_0}{\enorm{\overline{g_i(x^k)}}}\bigg)\enorm{\bar{u_i^k}}^2
      +\frac{[g_i(x^k)]_0 (\overline{g_i(x^k)}^\T\bar{u_i^k})^2}{\enorm{\overline{g_i(x^k)}}^3}\Bigg)}.}
      \jo{\begin{split}(u_i^k)^\T V^{k}_iu_i^k = \frac{1}{2}\Bigg([u_i^k]_0^2-
      \frac{2[u_i^k]_0\overline{g_i(x^k)}^\T\bar{u_i^k}}{\enorm{\overline{g_i(x^k)}}}
      + \bigg(1-\frac{[g_i(x^k)]_0}{\enorm{\overline{g_i(x^k)}}}\bigg)\enorm{\bar{u_i^k}}^2
      +\Bigg. \\ \Bigg. 
      +\frac{[g_i(x^k)]_0 (\overline{g_i(x^k)}^\T\bar{u_i^k})^2}{\enorm{\overline{g_i(x^k)}}^3}\Bigg).\end{split}}
      \end{equation*}
      Also, since $d^k\in S(x^k,x\s)$ and $i\in I_B(x\s)$ we have
  $\tilde{g}_i(x^k)^\T\Gamma_i u_i^k=0$, or equivalently,
  $\overline{g_i(x^k)}^\T\bar{u_i^k} = \enorm{\overline{g_i(x^k)}}[u_i^k]_0$. Replacing this in the above expression, we obtain:
  \si{\begin{align}
    (u_i^k)^\T V^{k}_iu_i^k 
    & = \displaystyle{\frac{1}{2}\left([u_i^k]_0^2-2[u_i^k]_0^2
      + \bigg(1-\frac{[g_i(x^k)]_0}{\enorm{\overline{g_i(x^k)}}}\bigg)\enorm{\bar{u_i^k}}^2
      + \frac{[u_i^k]_0^2[g_i(x^k)]_0}{\enorm{\overline{g_i(x^k)}}}\right)} \nonumber \\[5pt]
    & = \displaystyle{-\frac{1}{2} \left(1-\frac{[g_i(x^k)]_0}{\enorm{\overline{g_i(x^k)}}}\right)
      \Big([u_i^k]_0^2-\enorm{\bar{u_i^k}}^2\Big)}.\label{socp:uVu}
  \end{align}}
  \jo{\begin{align}
    & \ (u_i^k)^\T V^{k}_iu_i^k \nonumber\\  
    = & \  \displaystyle{\frac{1}{2}\left([u_i^k]_0^2-2[u_i^k]_0^2
      + \bigg(1-\frac{[g_i(x^k)]_0}{\enorm{\overline{g_i(x^k)}}}\bigg)\enorm{\bar{u_i^k}}^2
      + \frac{[u_i^k]_0^2[g_i(x^k)]_0}{\enorm{\overline{g_i(x^k)}}}\right)} \nonumber \\[5pt]
    = & \ \displaystyle{-\frac{1}{2} \left(1-\frac{[g_i(x^k)]_0}{\enorm{\overline{g_i(x^k)}}}\right)
      \Big([u_i^k]_0^2-\enorm{\bar{u_i^k}}^2\Big)}.\label{socp:uVu}
  \end{align}
  }

  It follows from our specific choice of approximate multiplier that
  \begin{equation}\nonumber
  \omega^k_i=\rho_k\Pi_{\K_i}(-g_i(x^k))
  =\rho_k\frac{\enorm{\overline{g_i(x^k)}}-[g_i(x^k)]_0}{2}
  \left(1,-\frac{\overline{g_i(x^k)}}{\enorm{\overline{g_i(x^k)}}}\right).
  \end{equation}
  Hence, we have
  \[
  \frac{[\omega_i^k]_0}{\enorm{\overline{g_i(x^k)}}}=\frac{\rho_k}{2} 
  \left(1-\frac{[g_i(x^k)]_0}{\enorm{\overline{g_i(x^k)}}}\right)
  \] 
  and from~\eqref{socp:uVu}, we obtain 
  \begin{equation}\label{socp:sigmaborder}
  \rho_k (u_i^k)^\T V^{k}_iu_i^k = -\frac{[\omega_i^k]_0}{\enorm{\overline{g_i(x^k)}}}
  \Big([u_i^k]_0^2-\enorm{\bar{u_i^k}}^2\Big)
  = -\frac{[\omega_i^k]_0}{\enorm{\overline{g_i(x^k)}}} (u_i^k)^\T \Gamma_i u_i^k.
  \end{equation}

  \item [(iii)] $\{g_i(x^k)\}_{k\in N_3}\subset \bd^+(\K_i)$. For every $k\in N_3$, we have $\omega^k_i=0$. Also, for all such $k$, Lemma~\ref{socp:lemma-subdiffproj} item (e) implies $$V^k_i = \tau M_i \left( -1, -\frac{\overline{g_i(x^k)}}{\enorm{\overline{g_i(x^k)}}}\right),$$ for some $\tau\in [0,1]$. Then, note that $$M_i \left( -1, -\frac{\overline{g_i(x^k)}}{\enorm{\overline{g_i(x^k)}}}\right)=M_i \left( -\frac{[g_i(x^k)]_0}{\enorm{\overline{g_i(x^k)}}}, -\frac{\overline{g_i(x^k)}}{\enorm{\overline{g_i(x^k)}}}\right),$$ which means that simply multiplying \eqref{socp:sigmaborder} by $\tau$ is enough to obtain $\rho_k (u_i^k)^\T V^{k}_iu_i^k=0$ as well, since $[\omega_i^k]_0=0$.
\end{enumerate}  

  \end{enumerate}
  
 Considering exclusively any infinite subsequence indexed by $N_1$, $N_2$, or $N_3$, based on our previous analyses we observe that, for $k$ sufficiently large, \eqref{socp:akkt2ex} implies
  \[
  \liminf_{k \to \infty} \,(d^k)^\T\bigg(\nabla_x^2L(x^k,\omega^k,\mu^k)
  -\sum_{i\in I_B(x\s)} \frac{[\omega_i^k]_0}{\enorm{\overline{g_i(x^k)}}}Dg_i(x^k)^\T \Gamma_i Dg_i(x^k)\bigg)d^k\geq 0.
  \]
  Since $x^k\to x\s$, $\omega^k \to \omega\s$, $\mu^k \to \mu\s$,
  $d^k\to d\in S(x\s)$ and $\enorm{\overline{g_i(x^k)}} \to
  \enorm{\overline{g_i(x\s)}} = [g_i(x\s)]_0$ when $i \in I_B(x\s)$, we conclude that
  \[
  d^\T\bigg(\nabla_x^2L(x\s,\omega\s,\mu\s)+\sum_{i\in I_{BB}(x\s)}\sigma_i(x\s,\omega\s)\bigg)d \geq 0 
  \quad \mbox{for all } d\in S(x\s),
  \]
  where $\sigma_i(x\s,\omega\s)$ is defined as in \eqref{socp:sigma}. Therefore, $x\s$
  satisfies WSOC.   
\end{proof}


Note that Theorem~\ref{socp:minwsoc} contains a proof for the fact that every feasible limit point of any sequence $\seq{x}$ generated by an \textit{external penalty method} must satisfy WSOC if it satisfies Robinson's CQ and WCR. Moreover, with minor adaptations, it is possible to prove that the same holds for every feasible limit point of a modified extension of the \textit{augmented Lagrangian method} for NLP considered in~\cite{Birgin2016a}. And finally, we remark that if $m_1=\ldots=m_r=1$, that is, if~\eqref{NSOCP} reduces to a NLP problem, then Theorem~\ref{socp:minwsoc} recovers a result by Andreani et al.~\cite[Crlr. 4.2 and Crlr. 4.3]{akkt2} with an alternative proof. 

\section{Semidefinite programming}\label{sec:sdp}

In this section, $\sym$ is the linear space of all $m\times m$ symmetric matrices with real entries,  equipped with the (\textit{Frobenius}) inner product given by $\langle M,N \rangle\doteq \textnormal{trace}(MN)$ and the norm $\fnorm{M}\doteq \sqrt{\langle M,M\rangle}$, for every $M,N\in \sym$. We define $M\odot N$ as the (\textit{Hadamard}) entry-wise product between $M$ and $N$. Also, the cone of all symmetric positive semidefinite matrices is denoted by $\sym_+$ and $\succeq$ is the partial order induced by it, that is, $M\succeq N$ if, and only if, $M-N\in \sym_+$. Similarly, $M\succ N$ when $M-N\in \interior(\sym_+)$.

Recall that every $M\in \sym$ has a \textit{spectral decomposition} in the form $M=U\Lambda U^\T $, where $U$ is an orthogonal matrix whose columns are eigenvectors of $M$ and $\Lambda=\textnormal{Diag}(\lambda_1^U(M),\dots,\lambda_m^U(M))$ is a diagonal matrix whose entries are the eigenvalues of $M$ respective to the columns of $U$. It is well-known that the orthogonal projection of $M$ onto $\sym_+$ under $\fnorm{\cdot}$ is given by $$\projs(M)\doteq U \textnormal{Diag}(\max\{0,\lambda_1^U(M)\},\dots,\max\{0,\lambda_m^U(M)\}) U^\T.$$ The specialization of (\ref{NCP}) to an NSDP is obtained by setting $\E=\sym$ and $\K=\sym_+$, and it is often stated in the form

\begin{equation}
  \tag{NSDP}
  \begin{aligned}
    & \underset{x \in \mathbb{R}^{n}}{\text{Minimize}}
    & & f(x), \\
    & \text{subject to}
    & & g(x) \succeq 0, \\
    & & & h(x) = 0.
  \end{aligned}
  \label{NSDP}
\end{equation}

Here, for simplicity, we consider a single conic constraint since it is enough to cover all major aspects of the problem and the notation would be unnecessarily heavy otherwise. Similarly to the NSOCP case, several concepts of general conic programming can be specialized and explicitly characterized here, for example, the tangent cone to $\sym_+$ at some $M\in \sym_+$ can be written as $$T_{\mathbb{S}^m_+}(M)=\{E\in \mathbb{S}^m\colon  V^\T  E V\in \mathbb{S}^{\vert\beta\vert}_+\},$$ where $V\in \R^{m\times \vert\beta\vert}$ is any matrix with orthonormal columns that form a basis for $\textnormal{Ker}(M)$ and $\vert\beta\vert$ is its dimension (see~\cite{Shapiro1997} for details). 

Let $x$ be a feasible point of (\ref{NSDP}). In this section we always consider spectral decompositions of $g(x)$ that keep zero and nonzero eigenvalues separated, for example, $$g(x)=U\begin{bmatrix}
\Lambda & 0 \\ 
0 & 0 \\ \end{bmatrix}U^\T ,$$ where $\mathbb{S}^{\alpha}\ni\Lambda\succ 0$ and $\alpha\doteq \alpha(x)$ is the set of indices of the positive eigenvalues of $g(x)$. Let $\beta\doteq \beta(x)$ be the set of indices of the null eigenvalues of $g(x)$ and partition the columns $U$ with respect to $\alpha$ and $\beta$ as follows: $U\doteq [U_\alpha, U_\beta]$. For every $d\in\R^n$, define $D\tilde{g}(x)[d]\doteq U^\T Dg(x)[d] U$ as a reverse conjugation of $Dg(x)[d]$ around $g(x)$ and set $$D\tilde{g}(x)[d]=\begin{bmatrix}
(D\tilde{g}(x)[d])_{\alpha\alpha} & (D\tilde{g}(x)[d])_{\alpha\beta} \\ 
(D\tilde{g}(x)[d])_{\alpha\beta}^\T  & (D\tilde{g}(x)[d])_{\beta\beta} \\ 
\end{bmatrix}$$ as a partition of $D\tilde{g}(x)[d]$ with respect to $\alpha$ and $\beta$. Note that since $U$ is an orthogonal matrix, the inner product is invariant to reverse conjugation in terms of $U$, that is, $$\langle A,B\rangle=\textnormal{trace}(AB)=\textnormal{trace}(UU^\T AUU^\T B)=\textnormal{trace}(U^\T AUU^\T BU)=\langle \tilde{A},\tilde{B}\rangle$$ for all $A,B\in \sym$.

The critical cone of \eqref{NSDP} at a feasible point $x\s$ is given by $$C(x\s)= \{d\in \R^n\colon  \nabla f(x\s)^\T d=0, Dh(x\s)d=0,(D\tilde{g}(x\s)[d])_{\beta\beta}\succeq 0\}.$$ Under Robinson's CQ, if $x\s$ is a KKT point associated with some Lagrange multipliers $\omega\s\in \sym_+$ and $\mu\s\in \R^p$ that satisfy strict complementarity, then the critical cone becomes equal to the critical subspace $$S(x\s)=\{d\in \R^n\colon  Dh(x\s)d=0, (D\tilde{g}(x\s)[d])_{\beta\beta}=0\}$$ since $\tilde{\omega}\s_{\alpha\alpha}=0$ and  $\nabla f(x\s)^\T d = \langle Dg(x\s)[d],\omega\s \rangle - \langle Dh(x\s)d, \mu\s\rangle=0$ for every $d\in \R^n$.

In~\cite{ahv}, Andreani, Haeser, and Viana proposed an extension of the AKKT condition from NLP to \eqref{NSDP} as well. We state it as follows:

\begin{definition}[AKKT for NSDP]\label{sdp:akkt}
A feasible point $x\s$ of \eqref{NSDP} satisfies the AKKT condition when there are sequences $\seq{x}\to x\s$, $\seq{\omega}\subset \sym_+$, and $\seq{\mu}\subset \R^p$ such that 
\begin{equation}\label{sdp:akktstat}
\nabla_x L(x^k,\omega^k,\mu^k)\to 0
\end{equation}
and
\begin{equation}\label{sdp:akktcomp}
\lambda_i^U(g(x\s))>0\Rightarrow \lambda_i^{S^k}(\omega^k)= 0,
\end{equation}
for sufficiently large $k$, where $U$ diagonalizes $g(x\s)$, $S^k$ diagonalizes $\omega^k$ for each $k$, and $S^k\to U$.
\end{definition}

If $x\s$ is a KKT point of (\ref{NSDP}) associated with multipliers $\omega\s\in \sym_+$ and $\mu\s\in \R^p$, note that the complementarity condition $\langle g(x\s),\omega\s \rangle=0$ holds for $g(x\s),\omega\s\in\sym_+$ if, and only if, $g(x\s)\omega\s=0$, then it is elementary to check that $g(x\s)$ and $\omega\s$ must be simultaneously diagonalizable (i.e. they commute) in this case. In light of this, note that Definition~\ref{sdp:akkt} relaxes the commutativity between $g(x\s)$ and $\omega\s$ by requiring $S^k\to U$. 

Also in~\cite{ahv}, the authors prove that AKKT as in Definition~\ref{sdp:akkt} is a necessary optimality condition, independently of the fulfilment of constraint qualifications. We state it below in the same form as Theorem~\ref{socp:minakkt}, with some emphasis on how the sequences that compose it are generated.

\begin{theorem}\label{sdp:minakkt}
Let $x\s$ be a local minimizer of \eqref{NSDP}. Then, for any sequence $\{\rho_k\}_{k\in \N}\to +\infty$, there exists some $\seq{x}\to x\s$, such that for every $k$, $x^k$ is a local minimizer of the regularized penalty function 
$$F_k(x) \doteq   f(x)+\frac{1}{4}\enorm{x-x\s}^4 
  + \frac{\rho_k}{2} \left(\fnorm{\Pi_{\sym_+}(-g(x))}^2
  + \enorm{h(x)}^2 \right).$$
Also, the multiplier sequences given by $\omega^k \doteq \rho_k \Pi_{\sym_+}(-g(x^k))$ and $\mu^k \doteq   \rho_k h(x^k)$ satisfy \eqref{sdp:akktstat} and \eqref{sdp:akktcomp} with $\seq{x}$. Consequently, since $\omega^k$ and $g(x^k)$ are simultaneously diagonalizable in this case for every $k\in \N$, $x\s$ satisfies AKKT.
\end{theorem}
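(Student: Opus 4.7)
The plan is to mirror the strategy used for Theorem~\ref{socp:minakkt} in the NSOCP case: a classical localized external penalty argument, relying on the spectral representation of the projection onto $\sym_+$ to handle the conic term. First I would fix $\delta>0$ small enough that $x\s$ is a global minimizer of $f$ on the intersection of the feasible set of \eqref{NSDP} with the closed ball $\bar{B}\doteq\{x\in\R^n\colon\enorm{x-x\s}\leq\delta\}$, and for each $k\in\N$ obtain a global minimizer $x^k$ of $F_k$ over $\bar{B}$ by the Weierstrass theorem.

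Next I would show $x^k\to x\s$. From $F_k(x^k)\leq F_k(x\s)=f(x\s)$ one sees that $\rho_k(\fnorm{\projs(-g(x^k))}^2+\enorm{h(x^k)}^2)/2$ is bounded in $k$, and since $\rho_k\to+\infty$, every accumulation point $\bar{x}$ of $\seq{x}$ is feasible for \eqref{NSDP}. Passing to the limit in the same inequality yields $f(\bar{x})+\enorm{\bar{x}-x\s}^4/4\leq f(x\s)$, so feasibility of $\bar{x}$ together with $\bar{x}\in\bar{B}$ and the local optimality of $x\s$ force $\bar{x}=x\s$; hence for $k$ large enough $x^k$ lies in the interior of $\bar{B}$ and is an unconstrained local minimizer of $F_k$, which is what the statement claims.

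For the stationarity part I would apply Theorem~\ref{thm:fitz} to the infeasibility measure $\Phi$, which transforms $\nabla F_k(x^k)=0$ into
\[
\nabla f(x^k)+\enorm{x^k-x\s}^2(x^k-x\s)+\rho_k Dh(x^k)^\T h(x^k)-\rho_k Dg(x^k)^*[\projs(-g(x^k))]=0.
\]
Setting $\omega^k\doteq\rho_k\projs(-g(x^k))\in\sym_+$ and $\mu^k\doteq\rho_k h(x^k)\in\R^p$ turns this identity into $\nabla_x L(x^k,\omega^k,\mu^k)=-\enorm{x^k-x\s}^2(x^k-x\s)\to 0$, establishing~\eqref{sdp:akktstat}.

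The hard part will be the AKKT complementarity condition~\eqref{sdp:akktcomp}. For each $k$, I would take a spectral decomposition $g(x^k)=S^k\Lambda^k(S^k)^\T$ with $\Lambda^k$ diagonal; the spectral formula for the projection onto $\sym_+$ then gives $\projs(-g(x^k))=S^k\,\textnormal{Diag}(\max\{0,-\Lambda^k_{ii}\})(S^k)^\T$, so $g(x^k)$ and $\omega^k$ are simultaneously diagonalized by $S^k$, with the $i$-th eigenvalue of $\omega^k$ in this basis equal to $\rho_k\max\{0,-\lambda_i^{S^k}(g(x^k))\}$. Because $g(x^k)\to g(x\s)$, by continuity of the spectrum one may choose $S^k\to U$, where $U$ diagonalizes $g(x\s)$. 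If $\lambda_i^U(g(x\s))>0$, then $\lambda_i^{S^k}(g(x^k))>0$ for all sufficiently large $k$, and hence $\lambda_i^{S^k}(\omega^k)=\rho_k\max\{0,-\lambda_i^{S^k}(g(x^k))\}=0$, which is~\eqref{sdp:akktcomp}. The delicate step is securing the joint convergence $S^k\to U$ in the presence of repeated eigenvalues of $g(x\s)$; this is handled by grouping indices according to the eigenvalue coincidences in $g(x\s)$ and selecting, for each group, orthonormal bases of the corresponding invariant subspaces of $g(x^k)$ that converge to an orthonormal basis of the associated eigenspace of $g(x\s)$.
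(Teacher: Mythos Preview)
The paper does not give its own proof of this theorem; it simply restates the result of Andreani, Haeser, and Viana~\cite{ahv} with slightly different emphasis. Your proposal reproduces the standard external-penalty argument that \cite{ahv} uses: localize to a ball where $x^\star$ is globally optimal, minimize $F_k$ there, show $x^k\to x^\star$ from $F_k(x^k)\le F_k(x^\star)=f(x^\star)$, read off $\nabla_x L(x^k,\omega^k,\mu^k)\to 0$ from $\nabla F_k(x^k)=0$ via Theorem~\ref{thm:fitz}, and verify~\eqref{sdp:akktcomp} using the spectral formula for $\Pi_{\sym_+}$. So in substance your route coincides with the one the paper imports.

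The one point that deserves a sharper justification is the convergence $S^k\to U$. Your remedy---choose, for each eigenspace $E_j$ of $g(x^\star)$, an orthonormal basis of the nearby invariant subspace $E_j^k$ of $g(x^k)$ that converges to a basis of $E_j$---does produce $S^k\to U$, but such a basis of $E_j^k$ need not consist of eigenvectors of $g(x^k)$ (or of $\omega^k$) when the eigenvalues of $g(x^k)$ within that cluster are distinct; hence $(S^k)^\top\omega^k S^k$ may fail to be diagonal on the block corresponding to $\mathrm{Ker}(g(x^\star))$. The usual fix, and the one implicit in \cite{ahv}, is simpler: take \emph{any} orthogonal $S^k$ that simultaneously diagonalizes $g(x^k)$ and $\omega^k$ (which exists since they commute), use compactness of the orthogonal group to extract a subsequence $S^{k_j}\to U$, note that $U$ then diagonalizes $g(x^\star)$, and relabel. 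Since AKKT only asserts the existence of suitable sequences, passing to a subsequence is harmless. With that adjustment your argument is complete.
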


Under Robinson's CQ, the sequences $\seq{\omega}$ and $\seq{\mu}$ are bounded, and also all limit points of those sequences are Lagrange multipliers associated with $x\s$~\cite[Thm. 6.1]{ahv}. That is, AKKT implies KKT in this case. An augmented Lagrangian algorithm is also presented in~\cite{ahv} for NSDP, whose global convergence theory is built around AKKT. Such results were sharpened in~\cite{Santos2021} and further extended in~\cite{Andreani2020}, for the general \eqref{NCP}.

\subsection{Second-order analysis}

As mentioned before, there are many different works that deal with a specialized second-order analysis for \eqref{NSDP}, which mainly differ in the assumptions required for it and the techniques employed to characterize the sigma-term. As far as we know, the first work on this topic is due to Shapiro~\cite[Sec. 4]{Shapiro1997}, who obtained the very useful and practical expression
\begin{equation}\label{sdp:sigmaterm} 
\sigma(x\s,\omega\s)= [2\langle \omega\s,\partial_i g(x\s) g(x\s)^{\dag} \partial_j g(x\s) \rangle]_{i,j\in\{1,\dots,n\}},
\end{equation}
where $g(x\s)^\dag$ is the \textit{Moore-Penrose pseudoinverse} of $g(x\s)$. Shapiro's idea was to write the semidefinite cone using the second-order directional derivative of the least eigenvalue function $\lambda_{\min}$, as follows:
\[
	\sym_+= \{M\in \sym\colon \lambda_{\min}(M)\geqslant 0\},
\] 
and the expression of the sigma-term comes from the expression of the second-order directional derivative of $\lambda_{\min}$. Moreover, his second-order analysis was based on the uniqueness of the Lagrange multiplier, via nondegeneracy, and strict complementarity (Theorem \ref{classicwsoc}). Then, Jarre~\cite[Thm. 2]{Jarre2012} presented another way of achieving Shapiro's characterization of the sigma-term, and consequently an alternative proof for Theorem~\ref{classicwsoc}, using a locally equivalent formulation of \eqref{NSDP} based on the Schur complement of $g(x\s)_{\alpha\alpha}$, which turned out to be a more elementary proof. Later, Lourenço, Fukuda, and Fukushima~\cite[Props. 5.1 and 5.2]{Lourenco2018}, studied a characterization of the semidefinite cone with squared slack variables
\[
	\sym_+= \{M\in \sym\colon \exists Z\in \sym, \ M-ZZ=0\},
\] 
which induces a reformulation of~\eqref{NSDP} as a NLP problem. Then, the authors related the classical second-order conditions for NLP with the second-order conditions for~\eqref{NSDP} (with the curvature term), under the same hypotheses as Shapiro and Jarre. Forsgren~\cite[Thm. 2]{Forsgren2000}, on the other hand, proved that strict complementarity was not needed for WSOC when assuming a different notion of regularity that treats structural sparsity and also uses Schur complements. In this section, we use the characterization
\[
	\sym_+= \{M\in \sym\colon \vert\vert\Pi_{\sym_+}(-M)\vert\vert^2=0\},
\] 
and the generalized derivative of the orthogonal projection, to obtain second-order results that do not require uniqueness of multipliers, nor strict complementarity.

Our approach is based on extracting second-order information from AKKT and Theorem \ref{sdp:minakkt} and we do this in a similar manner of the previous section, which means we begin by exhibiting a characterization of the derivative of $\Pi_{\sym_+}$, then we extend the WCR condition from NLP to NSDP and, at last, we compute the sigma-term using the second-order (generalized) derivative of $F_k$. 

Based on the works of Bonnans at al.~\cite{FredericBonnans1998} and Pang et al.~\cite{PSS03}, Sun~\cite{Sun2006} characterized the $B$-subdifferential of the projection onto the semidefinite cone. To make a proper reference, we define the following matrix:

$$\mathcal{B}(\lambda^U(M))\doteq \left[\frac{\max\{\lambda_i^U(M),0\}+\max\{\lambda_j^U(M),0\}}{\vert\lambda_i^U(M)\vert+\vert\lambda_j^U(M)\vert}\right]_{i,j\in\{1,\dots,m\}},$$ where $0/0$ is set as $1$ and $U$ is an orthogonal matrix that diagonalizes $M$. Next, we make a slightly adapted transcription of a proposition by Qi~\cite[Prop. 2.5]{Qi2009} summarizing Sun's result:

\begin{proposition}\label{sdp:propqi}
Suppose that $M=U\Lambda U^\T $ is the spectral decomposition of $M\in \mathbb{S}^m$ and let $\alpha,\beta$ and $\gamma$ be the sets of indices of the positive, zero and negative eigenvalues of $M$, respectively. Without loss of generality, assume those three blocks are separated and that $U\doteq [U_\alpha, U_\beta,U_\gamma]$. Then, for any $V\in \partial_B \Pi_{\mathbb{S}^m_+} (-M)$ there exists a $V_{\vert\beta\vert}\in \partial_B \Pi_{\mathbb{S}^{\vert\beta\vert}_+}(0)$ such that 
\begin{equation}\label{qi}
V[H]=U
\begin{bmatrix}
0 & 0 & \tilde{H}_{\alpha\gamma} \odot \mathcal{B}(\lambda^U(M))_{\alpha\gamma} \\ 
0  & V_{\vert\beta\vert}[\tilde{H}_{\beta\beta}] & \tilde{H}_{\beta\gamma} \\ 
\tilde{H}_{\alpha\gamma}^\T  \odot \mathcal{B}(\lambda^U(M))_{\alpha\gamma}^\T  & \tilde{H}_{\beta\gamma}^\T & \tilde{H}_{\gamma\gamma} \\
\end{bmatrix} U^\T 
\end{equation}
for every $H\in \mathbb{S}^m$, where $\tilde{H}\doteq U^\T H U$. Conversely, for every $V_{\vert\beta\vert}\in \partial_B \Pi_{\mathbb{S}^{\vert\beta\vert}_+}(0)$, there exists some $V\in \partial_B \Pi_{\mathbb{S}^m_+} (-M)$ such that (\ref{qi}) holds. 
\end{proposition}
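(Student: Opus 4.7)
The plan is to compute $\partial_B \Pi_{\mathbb{S}^m_+}(-M)$ directly from its definition as the set of limits of Fréchet derivatives $D\Pi_{\mathbb{S}^m_+}(Y^k)$ along sequences $Y^k \to -M$ with each $Y^k$ having no zero eigenvalue. On such matrices the Löwner-type formula gives an explicit derivative: if $Y = R\,\mathrm{diag}(\nu)\,R^\T$ is a spectral decomposition with all $\nu_i\neq 0$, then $D\Pi_{\mathbb{S}^m_+}(Y)[H] = R\big( R^\T H R \odot \mathcal{B}(\nu)\big) R^\T$. The proof therefore reduces to tracking the limit of this formula along $Y^k = -M + \Delta^k$ as $\Delta^k\to 0$, and to matching the result block by block with the structure displayed in~\eqref{qi}.

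The first step is to exploit the equivariance $\Pi_{\mathbb{S}^m_+}(U X U^\T) = U\,\Pi_{\mathbb{S}^m_+}(X)\,U^\T$ in order to work in the eigenbasis of $M$. Standard perturbation theory for symmetric matrices then ensures that the invariant subspaces of $Y^k$ associated with the $\alpha$-group and the $\gamma$-group of eigenvalues are stable, since these groups are separated from zero by an eigenvalue gap; hence the corresponding columns of $R^k$ converge to $U_\alpha$ and $U_\gamma$, respectively. In contrast, the $\beta$-columns may undergo an $O(1)$ rotation inside $\mathrm{span}(U_\beta)$: after passing to a subsequence one writes them as $U_\beta Q^k$ with $Q^k\to Q$ orthogonal, and the associated $\beta$-eigenvalues of $Y^k$ equal, to leading order, the eigenvalues of $(Q^k)^\T \Delta^k_{\beta\beta} Q^k$.

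Plugging these into the Löwner formula and letting $k\to\infty$ isolates the block structure in~\eqref{qi}. The $\alpha\alpha$, $\alpha\gamma$, $\gamma\gamma$, $\alpha\beta$, and $\beta\gamma$ entries of $\mathcal{B}(\nu^k)$ depend only on signs and ratios of the $\alpha$ and $\gamma$ eigenvalues, which are bounded away from zero, so their limits are the deterministic values $0$, the prescribed $\mathcal{B}(\lambda^U(M))_{\alpha\gamma}$ block, $1$, $0$, and $1$, matching each nonzero entry of the middle matrix. The $\beta\beta$ block is scale-invariant in $\Delta^k_{\beta\beta}$, so its limit depends only on $Q$ and on the eigenvalue signs of $\Delta^k_{\beta\beta}$; comparing with the analogous perturbation analysis carried out for $\Pi_{\mathbb{S}^{|\beta|}_+}$ around $0$, this limit is exactly an element of $\partial_B \Pi_{\mathbb{S}^{|\beta|}_+}(0)$ acting on $\widetilde H_{\beta\beta}$. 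The converse direction is a reversal of the construction: given $V_{|\beta|}\in\partial_B \Pi_{\mathbb{S}^{|\beta|}_+}(0)$, realize it by a sequence $Z^k\to 0$, set $\Delta^k = U_\beta Z^k U_\beta^\T$, and read off the resulting $V\in\partial_B\Pi_{\mathbb{S}^m_+}(-M)$ from the same formula.

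I expect the main technical obstacle to be the control of the $\beta$-columns of $R^k$: by Kato-type perturbation theory, symmetric matrices with repeated eigenvalues may have eigenvectors that depend discontinuously on the perturbation, and one must show that along a subsequence the $\beta$-columns can nevertheless be chosen to converge to $U_\beta Q$ with $Q$ determined by the dominant direction of $\Delta^k_{\beta\beta}$. This is precisely what allows the limiting $\beta\beta$-block action to factor as a well-defined element of $\partial_B\Pi_{\mathbb{S}^{|\beta|}_+}(0)$ and thereby connects the two B-subdifferentials in the statement.
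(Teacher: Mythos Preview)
The paper does not prove this proposition at all: it is introduced as ``a slightly adapted transcription of a proposition by Qi~[Prop.~2.5, \emph{Local duality of nonlinear semidefinite programming}] summarizing Sun's result'', and is simply cited from the literature (the underlying sources being Pang--Sun--Sun~2003 and Sun~2006). So there is no in-paper proof to compare against.

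Your sketch is essentially the argument that appears in those original references. The strategy---compute $D\Pi_{\mathbb{S}^m_+}$ at nonsingular points via the L\"owner/Daleckii--Krein formula, take limits along sequences $Y^k\to -M$ with simple spectrum, separate the stable $\alpha$- and $\gamma$-eigenspaces from the degenerating $\beta$-block by the eigenvalue gap, and identify the limiting $\beta\beta$-action with an element of $\partial_B\Pi_{\mathbb{S}^{|\beta|}_+}(0)$---is correct in outline and is exactly how Sun and Qi proceed. The converse via $\Delta^k=U_\beta Z^k U_\beta^\T$ is also the standard construction.

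One point that would need more care in a full write-up is the identification of the $\beta\beta$-limit with an element of $\partial_B\Pi_{\mathbb{S}^{|\beta|}_+}(0)$. You invoke scale-invariance and a ``comparison'' with the lower-dimensional problem, but to make this rigorous you must argue that \emph{every} limit obtained from the $\beta\beta$-block of $\mathcal{B}(\nu^k)$ conjugated by $Q^k$ is realized by some sequence $Z^k\to 0$ in $\mathbb{S}^{|\beta|}$ (and conversely). This is true---one can take $Z^k$ to be the $\beta\beta$-block of $U^\T Y^k U$, suitably rescaled---but your sketch asserts it rather than shows it. Also, your remark that the $\beta$-eigenvalues equal ``to leading order'' those of $(Q^k)^\T\Delta^k_{\beta\beta}Q^k$ is slightly off: the $\beta$-eigenvalues of $Y^k$ are, to leading order, the eigenvalues of $U_\beta^\T Y^k U_\beta$, and $Q^k$ is the orthogonal matrix diagonalizing this block, not an additional conjugation on top of $\Delta^k_{\beta\beta}$. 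This does not affect the conclusion, since what matters for $\mathcal{B}$ are only the signs of these eigenvalues.
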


Even though we assume the eigenvalues are separated by sign, the ordering inside each partition is not relevant.  Note that Proposition~\ref{sdp:propqi} is still true if we replace the B-subdifferential for the Clarke subdifferential.

\begin{corollary}\label{cor:qi}
Under the hypotheses of Proposition~\ref{sdp:propqi}, for any $V\in \partial \Pi_{\mathbb{S}^m_+} (-M)$ there exists a $V_{\vert\beta\vert}\in \partial \Pi_{\mathbb{S}^{\vert\beta\vert}_+}(0)$ such that \eqref{qi} holds. Conversely, for every $V_{\vert\beta\vert}\in \partial \Pi_{\mathbb{S}^{\vert\beta\vert}_+}(0)$, there exists some $V\in \partial \Pi_{\mathbb{S}^m_+} (-M)$ such that (\ref{qi}) holds.
\end{corollary}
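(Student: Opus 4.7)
The plan is to deduce Corollary \ref{cor:qi} directly from Proposition \ref{sdp:propqi} by exploiting the following two features of formula (\ref{qi}): (i) it is an affine map in the ``$\beta\beta$-block'' parameter $V_{\vert\beta\vert}$, since $V_{\vert\beta\vert}$ only enters through the linear expression $V_{\vert\beta\vert}[\tilde H_{\beta\beta}]$; and (ii) all the other blocks of the central matrix depend exclusively on $H$ and $M$, not on the choice of $V_{\vert\beta\vert}$. Combined with the definition $\partial\Pi_{\mathbb{S}^m_+}(-M)=\textnormal{Conv}(\partial_B\Pi_{\mathbb{S}^m_+}(-M))$ and its analogue for $\partial\Pi_{\mathbb{S}^{\vert\beta\vert}_+}(0)$, this should yield both directions almost immediately via a convex-combination argument.

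For the forward direction, I would take an arbitrary $V\in \partial\Pi_{\mathbb{S}^m_+}(-M)$ and, by Carath\'eodory, write $V=\sum_{i=1}^N t_i V^i$ with $V^i\in \partial_B\Pi_{\mathbb{S}^m_+}(-M)$, $t_i\geqslant 0$, and $\sum_i t_i=1$. By Proposition \ref{sdp:propqi}, for each $i$ there exists $V^i_{\vert\beta\vert}\in \partial_B\Pi_{\mathbb{S}^{\vert\beta\vert}_+}(0)$ such that $V^i[H]$ admits the representation (\ref{qi}) with $V^i_{\vert\beta\vert}$ in the $\beta\beta$-block. Evaluating $V[H]=\sum_i t_i V^i[H]$ for an arbitrary $H\in \sym$, feature (ii) ensures that the off-$\beta\beta$ blocks collapse back to their canonical form (since $\sum_i t_i=1$), while feature (i) turns the $\beta\beta$-block into $\bigl(\sum_i t_i V^i_{\vert\beta\vert}\bigr)[\tilde H_{\beta\beta}]$. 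Setting $V_{\vert\beta\vert}\doteq \sum_i t_i V^i_{\vert\beta\vert}$, which lies in $\partial\Pi_{\mathbb{S}^{\vert\beta\vert}_+}(0)$ by definition of the convex hull, yields (\ref{qi}) as required.

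For the reverse direction, I would start from an arbitrary $V_{\vert\beta\vert}\in \partial\Pi_{\mathbb{S}^{\vert\beta\vert}_+}(0)$, writing it as $V_{\vert\beta\vert}=\sum_{i=1}^N t_i V^i_{\vert\beta\vert}$ with $V^i_{\vert\beta\vert}\in \partial_B\Pi_{\mathbb{S}^{\vert\beta\vert}_+}(0)$, $t_i\geqslant 0$, $\sum_i t_i=1$. The converse part of Proposition \ref{sdp:propqi} supplies, for each $i$, some $V^i\in \partial_B\Pi_{\mathbb{S}^m_+}(-M)$ realizing (\ref{qi}) with that particular $V^i_{\vert\beta\vert}$. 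Defining $V\doteq \sum_i t_i V^i\in \partial\Pi_{\mathbb{S}^m_+}(-M)$, the same linearity/convexity bookkeeping as above shows that $V[H]$ matches (\ref{qi}) with the prescribed $V_{\vert\beta\vert}$ in the $\beta\beta$-block.

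Since the calculation is essentially trivial once feature (i) is recognized, there is no real obstacle here; the only thing worth being careful about is checking that the ``fixed'' blocks of (\ref{qi}) are genuinely independent of $V_{\vert\beta\vert}$, which is clear by inspection of the matrix in Proposition \ref{sdp:propqi}, and that both $\partial_B$ and $\partial$ coincide with their convex-hull relation on the $\beta\beta$-block. This last point is the only mildly delicate item: one must recognize that the mapping $V\mapsto V_{\vert\beta\vert}$ induced by (\ref{qi}) is itself affine, so that convex combinations in the domain correspond exactly to convex combinations in the target.
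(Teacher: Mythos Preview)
Your proposal is correct and follows essentially the same approach as the paper: both write an element of the Clarke subdifferential as a convex combination of $B$-subdifferential elements, apply Proposition~\ref{sdp:propqi} termwise, and use the affine dependence of formula~\eqref{qi} on $V_{\vert\beta\vert}$ to push the convex combination into the $\beta\beta$-block. Your write-up is somewhat more explicit about the structural features being exploited (your items (i) and (ii)), but the argument is the same.
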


\begin{proof}
Let $V\in \partial \Pi_{\mathbb{S}^m_+} (-M)$. Then, $V=\sum_{i=1}^s a_i V^i$, for some $s\in \N$, some $a_i\geqslant 0$, and some $V^i\in \partial_B \Pi_{\mathbb{S}^m_+} (-M)$, $i\in \{1,\ldots,s\}$, with $\sum_{i=1}^s a_i=1$. This means there are $V^i_{\vert\beta\vert}\in \partial_B \Pi_{\mathbb{S}^{\vert\beta\vert}_+}(0)$, $i\in \{1,\ldots,s\}$, such that \eqref{qi} holds. Hence, for every $H\in \sym$, we have $V[H]=\sum_{i=1}^s a_i V^i[H]$ and the proof is over, because $\sum_{i=1}^s a_i V^i_{\vert\beta\vert}[\tilde{H}_{\beta\beta}]\in \partial \Pi_{\mathbb{S}^{\vert\beta\vert}_+}(0)$. The converse is analogous.   
\end{proof}

In order to study perturbations of the critical subspace around a given point $x\s$ via WCR, let $\bar{\alpha}$ and $\bar{\beta}$ represent the indices of positive and zero eigenvalues of $g(x\s)$, respectively, regarding the decomposition 
\begin{equation}\label{sdp:decomp}
g(x\s)=U \textnormal{Diag}(\lambda^U(g(x\s)))U^\T,
\end{equation}
where $U\doteq [U_{\bar\alpha},U_{\bar\beta}]$ is a matrix whose columns are eigenvectors of $g(x\s)$ and, in particular, the columns of $U_{\bar\beta}$ form a basis for $\textnormal{Ker}(g(x\s))$. Moreover, we will use a construction from Bonnans and Shapiro's book~\cite[Ex. 3.98 and Ex. 3.140]{bshapiro}, which will be stated as a lemma below:

\begin{lemma}\label{sdp:lemma-bs}
	Let $M\s\in \sym_+$, set $\bar{\beta}$ as the indices of zero eigenvalues of $M$, and let $U_{\bar{\beta}}$ be a matrix with orthonormal columns that span $\textnormal{Ker}(M\s)$. There exists a neighborhood $\mathcal{N}$ of $M\s$ and an analytic matrix function $\mathcal{U}_{\bar{\beta}}\colon \mathcal{N}\to \R^{m\times \vert\bar\beta\vert}$ such that $\mathcal{U}_{\bar{\beta}}(M\s)=U_{\bar{\beta}}$ and, for every $M\in \mathcal{N}$, the columns of $\mathcal{U}_{\bar{\beta}}(M)$ form an orthonormal basis for the space spanned by the eigenvectors associated with the $\vert\bar\beta\vert$ smallest eigenvalues of $M$.
\end{lemma}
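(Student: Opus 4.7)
The plan is to construct $\mathcal{U}_{\bar\beta}$ via the Riesz spectral projector associated with the $\vert\bar\beta\vert$ smallest eigenvalues of $M$, followed by an analytic orthonormalization. Since $M\s \in \sym_+$ has the eigenvalue $0$ with multiplicity $\vert\bar\beta\vert$, there is a gap $\delta > 0$ separating $0$ from the remaining (positive) eigenvalues of $M\s$. I would choose a positively oriented circle $\Gamma \subset \mathbb{C}$ of radius $\delta/2$ centered at $0$ and define
$$P(M)\doteq \frac{1}{2\pi i}\oint_{\Gamma}(z\I_m - M)^{-1}\,dz.$$
By continuity of the spectrum, for $M$ in some open neighborhood $\mathcal{N}$ of $M\s$ no eigenvalue of $M$ lies on $\Gamma$, so the integrand is holomorphic in $z$ and analytic in $M$; hence $P\colon \mathcal{N}\to \R^{m\times m}$ is analytic.

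By the holomorphic functional calculus, $P(M)$ is the orthogonal projector onto the invariant subspace of $M$ spanned by the eigenvectors whose eigenvalues are enclosed by $\Gamma$. Shrinking $\mathcal{N}$ if necessary, these are exactly the $\vert\bar\beta\vert$ smallest eigenvalues of $M$, and in particular $P(M\s) = U_{\bar\beta}U_{\bar\beta}^\T$.

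Next, I would set $A(M)\doteq P(M)U_{\bar\beta}$, which is analytic with $A(M\s) = U_{\bar\beta}$ and hence $A(M\s)^\T A(M\s) = \I_{\vert\bar\beta\vert}$. Restricting $\mathcal{N}$ further so that $A(M)^\T A(M)$ stays close to $\I_{\vert\bar\beta\vert}$, the binomial series
$$\mathcal{S}(B)\doteq \sum_{k=0}^{\infty}\binom{-1/2}{k}(B - \I_{\vert\bar\beta\vert})^k$$
converges and yields an analytic inverse square root on the image of $M\mapsto A(M)^\T A(M)$. Finally, I would define
$$\mathcal{U}_{\bar\beta}(M)\doteq A(M)\,\mathcal{S}\!\left(A(M)^\T A(M)\right),$$
which is analytic, satisfies $\mathcal{U}_{\bar\beta}(M\s) = U_{\bar\beta}$, has orthonormal columns by construction, and whose image equals the range of $P(M)$, the desired subspace.

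The delicate part is simply the analyticity of the inverse square root of $A(M)^\T A(M)$; once $\mathcal{N}$ is chosen small enough that $A(M)^\T A(M)$ sits in the domain of convergence of $\mathcal{S}$, everything else reduces to standard consequences of the holomorphic functional calculus, essentially reproducing the construction of \cite[Ex. 3.98 and Ex. 3.140]{bshapiro}.
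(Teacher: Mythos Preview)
Your proposal is correct and is essentially the construction the paper defers to: the paper does not give its own proof of this lemma, but simply cites \cite[Ex.~3.98 and Ex.~3.140]{bshapiro}, which is exactly the Riesz--projector-plus-analytic-orthonormalization argument you carry out. One small point worth making explicit is that although the contour integral defining $P(M)$ involves complex $z$, the resulting projector is real because $M$ is real symmetric and $\Gamma$ is symmetric about the real axis; otherwise your $\mathcal{U}_{\bar\beta}$ would not obviously land in $\R^{m\times|\bar\beta|}$.
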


This construction allows us to approximate the critical subspace around $x\s$. Indeed, let $\mathcal{N}$ be the neighborhood of $g(x\s)$ and $\mathcal{U}_{\bar{\beta}}\colon \mathcal{N}\to \R^{m\times \vert\bar\beta\vert}$ be the function given by Lemma~\ref{sdp:lemma-bs} such that $\mathcal{U}_{\bar{\beta}}(g(x\s))=U_{\bar\beta}$. Then, for every $x$ close enough to $x\s$ so that $g(x)\in \mathcal{N}$, consider the following set: $$S(x,x\s)=\{d\in \R^n\colon \mathcal{U}_{\bar{\beta}}(g(x))^\T Dg(x)[d]\mathcal{U}_{\bar{\beta}}(g(x))=0, Dh(x)d=0\},$$
which will be called \textit{perturbed critical subspace} at $x$, centered at $x\s$.

Extending WCR from NLP to \eqref{NSDP} is not a trivial task because the notion of ``rank'' of the three-dimensional tensor $Dg(x)$ may have multiple meanings. Fortunately, there is a useful characterization of nondegeneracy by Shapiro and Fan \cite{shapfan}, which provides some insight on how to talk about rank in NSDP. Next, we make a transcription of this result as stated in \cite[Prop. 6]{Shapiro1997}, for completeness.

\begin{proposition}\label{prop:ndgshapiro}
Suppose that the dimension of $\textnormal{Ker}(g(x\s))$ is $\vert\bar\beta\vert$ and let $U_{\bar\beta}\doteq[u_1,\dots,u_{\vert\bar\beta\vert}]\in \R^{m\times \vert\bar\beta\vert}$ be a matrix whose columns form a basis for $\textnormal{Ker}(g(x\s))$. Then, nondegeneracy holds at a feasible point $x\s$ of \eqref{NSDP} if, and only if, the set of $n$-dimensional vectors $\{v_{ij}\colon 1\leqslant i\leqslant j\leqslant \vert\bar\beta\vert\}\cup\{\nabla h_i(x\s)\colon i\in \{1,\dots,p\}\}$ is linearly independent, where $v_{ij}\doteq [u_i^\T \partial_\ell g(x\s) u_j]_{\ell\in\{1,\dots,n\}}$.
\end{proposition}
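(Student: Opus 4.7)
The plan is to unfold the nondegeneracy condition into a surjectivity statement for a concrete linear map and then dualize it into the desired linear independence. The key geometric input is the identity
\[
\lin\big(T_{\sym_+}(g(x\s))\big)=\{E\in\sym\colon U_{\bar\beta}^\T E\, U_{\bar\beta}=0\},
\]
which follows from the characterization $T_{\sym_+}(g(x\s))=\{E\in\sym\colon U_{\bar\beta}^\T E\,U_{\bar\beta}\in \sym^{\vert\bar\beta\vert}_+\}$ recalled at the beginning of Section~\ref{sec:sdp}. Since $E\mapsto U_{\bar\beta}^\T E\,U_{\bar\beta}$ induces a linear isomorphism between $\sym/\lin(T_{\sym_+}(g(x\s)))$ and $\sym^{\vert\bar\beta\vert}$, the nondegeneracy condition \eqref{ncp:nondegen} translates into surjectivity of the linear operator
\[
T\colon \R^n\to \sym^{\vert\bar\beta\vert}\times \R^p,\qquad T(d)\doteq \big(U_{\bar\beta}^\T Dg(x\s)[d]\,U_{\bar\beta},\,Dh(x\s)d\big).
\]

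Next I would observe that surjectivity of $T$ is equivalent to injectivity of its adjoint. For $(W,\mu)\in \sym^{\vert\bar\beta\vert}\times \R^p$, a direct computation gives
\[
\langle T(d),(W,\mu)\rangle=\textnormal{trace}\big(W\,U_{\bar\beta}^\T Dg(x\s)[d]\,U_{\bar\beta}\big)+\mu^\T Dh(x\s)d=\sum_{\ell=1}^n d_\ell\Big(\sum_{i,j} W_{ij}\, u_i^\T \partial_\ell g(x\s)u_j+\mu^\T \nabla_\ell h(x\s)\Big),
\]
so the $\ell$-th component of $T^*(W,\mu)$ is $\sum_{i,j} W_{ij}\, u_i^\T \partial_\ell g(x\s)u_j+\sum_{k=1}^p \mu_k\, \partial_\ell h_k(x\s)$.

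To produce exactly the indices $1\leqslant i\leqslant j\leqslant \vert\bar\beta\vert$ appearing in the statement, I would expand $W$ in the natural symmetric basis, writing $W=\sum_{i} c_{ii} E_{ii}+\sum_{i<j} c_{ij}(E_{ij}+E_{ji})$. Using the symmetry $u_j^\T \partial_\ell g(x\s)u_i=u_i^\T \partial_\ell g(x\s)u_j$, this rewrites
\[
T^*(W,\mu)=\sum_{i=1}^{\vert\bar\beta\vert} c_{ii}\,v_{ii}+\sum_{1\leqslant i<j\leqslant \vert\bar\beta\vert} 2c_{ij}\,v_{ij}+\sum_{k=1}^p \mu_k \nabla h_k(x\s).
\]
Injectivity of $T^*$ is therefore equivalent to the statement that vanishing of the right-hand side forces every $c_{ij}$ and every $\mu_k$ to be zero, which is precisely the linear independence of
\[
\{v_{ij}\colon 1\leqslant i\leqslant j\leqslant \vert\bar\beta\vert\}\cup\{\nabla h_k(x\s)\colon k\in\{1,\dots,p\}\}.
\]

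The main obstacle is a conceptual rather than computational one: one must interpret the product on the right-hand side of \eqref{ncp:nondegen} as the range of the joint map $d\mapsto (Dg(x\s)[d]+\lin(T_{\sym_+}(g(x\s))),Dh(x\s)d)$, not as two independent surjectivity statements on each factor, since only the joint formulation is equivalent to joint linear independence of the gradient vectors. Once this is fixed, the proof reduces to the clean adjointness computation sketched above, together with a careful bookkeeping of the factor of $2$ on the off-diagonal entries of $W$, which is absorbed into the free coefficients $c_{ij}$ and hence disappears from the final independence condition.
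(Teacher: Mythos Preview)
Your proof is correct. The paper itself does not prove this proposition; it simply transcribes the statement from \cite[Prop.~6]{Shapiro1997} as a known result. Your argument---identify $\lin(T_{\sym_+}(g(x\s)))$ as the kernel of the compression $E\mapsto U_{\bar\beta}^\T E\,U_{\bar\beta}$, rewrite nondegeneracy as surjectivity of the joint map $d\mapsto(U_{\bar\beta}^\T Dg(x\s)[d]\,U_{\bar\beta},\,Dh(x\s)d)$, and dualize to injectivity of the adjoint---is the standard one and essentially coincides with Shapiro's original derivation. Your caveat about interpreting \eqref{ncp:nondegen} as the range of the joint map $d\mapsto(Dg(x\s)[d]+\lin(T_{\sym_+}(g(x\s))),Dh(x\s)d)$ rather than as a literal Cartesian product of two separate images is well placed: read literally, the formula would decouple into two independent surjectivity statements, which is strictly weaker than the joint condition equivalent to the linear independence claimed in the proposition.
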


Inspired by this characterization, we define WCR as follows:

\begin{definition}[WCR for NSDP]\label{sdp:wcr}
Let $x\s$ be a feasible point of \eqref{NSDP} and let \eqref{sdp:decomp} be a spectral decomposition of $g(x\s)$. We say that $x\s$ satisfies the weak constant rank (WCR) property when there exists a neightborhood $\mathcal{N}$ of $x\s$ such that the set $$\{\bar v_{ij}(x)\colon 1\leqslant i\leqslant j\leqslant \vert\bar\beta\vert\}\cup \{\nabla h_i(x)\colon i\in \{1,\dots,p\}\}$$ has the same rank for every $x\in \mathcal{N}$, where
\[
	\bar v_{ij}(x)\doteq [\bar{u}_i(x)^\T \partial_\ell g(x) \bar{u}_j(x)]_{\ell\in\{1,\dots,n\}}
\]
and $\bar{u}_1(x),\dots,\bar{u}_{\vert\bar\beta\vert}(x)\in \R^{m}$ denote the columns of $\mathcal{U}_{\bar{\beta}}(g(x))$.
\end{definition}

Also, in the following lemma we prove that WCR as in Definition \ref{sdp:wcr} is equivalent to the inner semicontinuity of the mapping $x\mapsto S(x,x\s)$ at $x\s$.

\begin{lemma}
A feasible point $x\s$ satisfies WCR if, and only if, the set-valued mapping $x\mapsto S(x,x\s)$ is inner semicontinuous at $x\s$.
\end{lemma}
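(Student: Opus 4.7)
The plan is to mirror the NSOCP argument from Lemma~\ref{socp:wcr-inner}, by first converting the matrix equation defining $S(x,x\s)$ into a system of linear scalar equations, and then applying a polarity/duality argument. First, observe that the $(i,j)$-entry of $\mathcal{U}_{\bar\beta}(g(x))^\T Dg(x)[d]\,\mathcal{U}_{\bar\beta}(g(x))$ equals
\[
\bar u_i(x)^\T Dg(x)[d]\,\bar u_j(x) = \sum_{\ell=1}^n \bar u_i(x)^\T \partial_\ell g(x)\,\bar u_j(x)\,d_\ell = \bar v_{ij}(x)^\T d,
\]
which is symmetric in $i,j$. Hence the matrix equation $\mathcal{U}_{\bar\beta}(g(x))^\T Dg(x)[d]\,\mathcal{U}_{\bar\beta}(g(x))=0$ is equivalent to the $|\bar\beta|(|\bar\beta|+1)/2$ scalar equations $\bar v_{ij}(x)^\T d = 0$ for $1\leqslant i\leqslant j\leqslant |\bar\beta|$, so $S(x,x\s)$ coincides with the orthogonal complement of the span of the vectors in Definition~\ref{sdp:wcr}.

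Next, as in the proof of Lemma~\ref{socp:wcr-inner}, I invoke~\cite[Thm. 1.1.8]{aubinf} to reduce inner semicontinuity of $x\mapsto S(x,x\s)$ at $x\s$ to outer semicontinuity of the polar mapping $x\mapsto S(x,x\s)\pol$. By the computation above, this polar is precisely
\[
S(x,x\s)\pol = \mathrm{span}\Big(\{\bar v_{ij}(x)\colon 1\leqslant i\leqslant j\leqslant |\bar\beta|\}\cup\{\nabla h_\ell(x)\colon \ell\in\{1,\dots,p\}\}\Big).
\]
Since $\mathcal{U}_{\bar\beta}$ is analytic on a neighborhood of $g(x\s)$ by Lemma~\ref{sdp:lemma-bs}, each $\bar v_{ij}$ depends continuously on $x$ (and the $\nabla h_\ell$ are continuous by hypothesis), so we can apply~\cite[Prop. 3.2.9]{fachpang}: the span-valued mapping generated by finitely many continuous vector fields is outer semicontinuous at $x\s$ if, and only if, the family has constant rank on a neighborhood of $x\s$. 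But this last property is exactly the WCR condition from Definition~\ref{sdp:wcr}, concluding the equivalence.

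The main subtlety to guard against is that the basis $U_{\bar\beta}$ of $\mathrm{Ker}(g(x\s))$ is not unique, so one might worry that WCR depends on the choice of $U_{\bar\beta}$ and on the particular extension $\mathcal{U}_{\bar\beta}$; this is harmless, however, because any two choices differ by multiplication by an orthogonal matrix of size $|\bar\beta|\times|\bar\beta|$ that varies continuously with $x$, and such a transformation preserves both the span $S(x,x\s)\pol$ and its rank.
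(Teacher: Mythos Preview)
Your proof is correct and follows essentially the same approach as the paper: both rewrite the matrix constraint $\mathcal{U}_{\bar\beta}(g(x))^\T Dg(x)[d]\,\mathcal{U}_{\bar\beta}(g(x))=0$ as the scalar system $\bar v_{ij}(x)^\T d=0$, identify $S(x,x\s)\pol$ as the span of $\{\bar v_{ij}(x)\}\cup\{\nabla h_\ell(x)\}$, and then combine~\cite[Thm.~1.1.8]{aubinf} with~\cite[Prop.~3.2.9]{fachpang}. Your additional remark on basis independence is a nice clarification not present in the paper's proof, but it does not change the argument.
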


\begin{proof}

First, we shall prove that, for every $x\in \R^n$,
\begin{equation}\label{sdp:subspacechar}
S(x,x\s)= \left\{d\in \R^n\colon \bar v_{ij}(x)^\T d=0, \ \si{\forall i,j \text{ such that }} 1\leqslant i\leqslant j\leqslant \vert\bar\beta\vert; \ Dh(x)d=0\right\}.
\end{equation}
Note that for each $\ell\in \{1,\dots,n\}$ we have
\begin{equation*}
\begin{aligned}
\mathcal{U}_{\bar{\beta}}(g(x\s))^\T \partial_\ell g(x)\mathcal{U}_{\bar{\beta}}(g(x\s))
& = 
\begin{bmatrix}
\bar{u}_{1}(x)^\T \partial_\ell g(x) \bar{u}_{1}(x) & \cdots & \bar{u}_{1}(x)^\T \partial_\ell g(x) \bar{u}_{\vert\bar\beta\vert}(x) \\
   \vdots    & \ddots & \vdots \\
\bar{u}_{\vert\bar\beta\vert}(x)^\T \partial_\ell g(x) \bar{u}_{1}(x) & \cdots & \bar{u}_{\vert\bar\beta\vert}(x)^\T \partial_\ell g(x) \bar{u}_{\vert\bar\beta\vert}(x)
\end{bmatrix},\\
\end{aligned}
\end{equation*}
hence, considering that $\bar{u}_{j}(x)^\T \partial_\ell g(x) \bar{u}_{i}(x)=\bar{u}_{i}(x)^\T \partial_\ell g(x) \bar{u}_{j}(x)=(\bar{v}_{ij}(x))_\ell$ for every $i,j,\ell$, we obtain $$\mathcal{U}_{\bar{\beta}}(g(x\s))^\T Dg(x)[d]\mathcal{U}_{\bar{\beta}}(g(x\s))=\si{\sum_{\ell=1}^n \mathcal{U}_{\bar{\beta}}(g(x\s))^\T \partial_\ell g(x)\mathcal{U}_{\bar{\beta}}(g(x\s))d_\ell=}
\begin{bmatrix}
\bar{v}_{11}(x)^\T d & \cdots & \bar{v}_{1\vert\bar\beta\vert}(x)^\T d \\
   \vdots    & \ddots & \vdots \\
\bar{v}_{1\vert\bar\beta\vert}(x)^\T d & \cdots &\bar{v}_{\vert\bar\beta\vert\vert\bar\beta\vert}(x)^\T d
\end{bmatrix},$$ whence follows \eqref{sdp:subspacechar}.

Now, similarly to the NSOCP case, since $\partial_\ell g$ and $\mathcal{U}_{\bar\beta}$ are continuous, $\bar{v}_{ij}$ is also continuous, then a result from Facchinei and Pang \cite[Prop. 3.2.9]{fachpang} tells us that WCR is equivalent to the outer semicontinuity of the mapping $x\mapsto S(x,x\s)\pol$ at $x\s$, where $$S(x,x\s)\pol=\left\{\sum_{1\leqslant i\leqslant j\leqslant \vert\bar\beta\vert} a_{ij}\bar{v}_{ij}(x) + \sum_{i=1}^p b_i \nabla h_i(x) \ \colon \ a_{ij}\in \R, b_i\in \R\right\}$$ using the characterization in \eqref{sdp:subspacechar}. Then, the desired result follows from \cite[Thm. 1.1.8]{aubinf}, which states that the inner semicontinuity of a set-valued mapping at a given point is equivalent to the outer semicontinuity of its polar at that point.   
\end{proof}

Clearly, WCR as in Definition \ref{sdp:wcr} is implied by nondegeneracy, in view of Proposition \ref{prop:ndgshapiro}. Also, let us assume for a moment that $g(x)$ is a structurally diagonal matrix constraint whose diagonal elements are denoted by $g_1(x),\ldots,g_m(x)$, and let $x\s$ be such that $g(x)\in \sym_+$. Without loss of generality, let us assume that $g_1(x\s)>\ldots>g_{\vert\bar\alpha\vert}(x\s)>g_{\vert\bar\alpha\vert+1}(x\s)=\ldots=g_{\vert\bar\alpha\vert+\vert\bar\beta\vert}(x\s)=0$. Then, we can take
\[
	\mathcal{U}_{\bar\beta}(g(x))\doteq \begin{bmatrix}
		0 \\
		\I_{\vert\bar\beta\vert}
	\end{bmatrix}
\]
as a constant function to obtain that $\bar{v}_{ii}(x)=\nabla g_i(x)$ for every $i\in \{1,\ldots,\vert\bar\beta\vert\}$ and $\bar{v}_{ij}(x)=0$ when $i\neq j$. That is, the WCR condition as in Definition~\ref{sdp:wcr} recovers the NLP definition of WCR when such NLP constraints are modelled as a single structurally diagonal matrix constraint, with this choice of $\mathcal{U}_{\bar\beta}$. It is important to keep in mind, however, that even if the constraints $g_1(x)\geqslant 0,\ldots,g_m(x)\geqslant 0$ satisfy LICQ at $x\s$, nondegeneracy may not hold at $x\s$, as observed by Shapiro in~\cite[p. 309]{Shapiro1997}. The converse, on the other hand, is true. Now, recall that \cite[Ex. 5.2]{ams07} exhibits an NLP problem with a feasible point that satisfies ``MFCQ+WCR'', but not LICQ, and the above discussion tells us that it can be used again to prove that nondegeneracy is strictly stronger than ``Robinson's CQ+WCR''. Moreover, nondegeneracy and LICQ are equivalent when considering multiple unidimensional semidefinite constraints, and so are Definition~\ref{sdp:wcr} and the NLP version of WCR. Furthermore, Forsgren~\cite[Sec. 2.3]{Forsgren2000} and Andreani et al.~\cite[Def. 3.2]{weak-sparse-cq} considered regularity notions different from nondegeneracy, that also recover the standard LICQ in NLP. Thus, in all cases, regardless of modelling, the example of~\cite[Ex. 5.2]{ams07} can be used to conclude that ``Robinson's CQ+WCR'' is strictly weaker than all existing notions of nondegeneracy.

With this in mind, we proceed to the main result of this section:

\begin{theorem}\label{sdp:minwsonc}
If $x\s$ is a local minimizer such that Robinson's CQ  and the WCR property hold, then there are some Lagrange multipliers $\omega\s\in \sym_+$ and $\mu\s\in \R^p$ such that the KKT conditions and WSOC hold for this pair of multipliers.
\end{theorem}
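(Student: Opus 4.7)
The plan is to mirror the structure of the proof of Theorem \ref{socp:minwsoc}, adapting each step to the semidefinite setting. First, Theorem \ref{sdp:minakkt} yields a sequence $\{x^k\}_{k\in \N}\to x\s$ of local minimizers of the regularized penalty function $F_k$, together with the associated approximate multipliers $\omega^k = \rho_k \Pi_{\sym_+}(-g(x^k))$ and $\mu^k = \rho_k h(x^k)$. Applying Theorem \ref{thm:nonsmooth2nd} at the unconstrained minimizer $x^k$ of $F_k$ and using the chain rule of Theorem \ref{thm:chainrule} to handle the nonsmooth contribution coming from $\fnorm{\Pi_{\sym_+}(-g(x))}^2$, I obtain, for every direction $d\in\R^n$, some $V^k\in \partial \Pi_{\sym_+}(-g(x^k))$ such that
\[
d^\T\Big[\nabla_x^2 L(x^k,\omega^k,\mu^k) + \rho_k Dg(x^k)^* \circ V^k \circ Dg(x^k) + \rho_k Dh(x^k)^\T Dh(x^k) + \Delta^k\Big] d \geqslant 0,
\]
where $\Delta^k\to 0$ absorbs the $\tfrac{1}{4}\enorm{x-x\s}^4$-perturbation. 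Robinson's CQ ensures that $\{(\omega^k,\mu^k)\}_{k\in \N}$ is bounded; passing to a subsequence, I assume it converges to a KKT multiplier pair $(\omega\s,\mu\s)$.

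For each fixed $d\in S(x\s)$, the WCR property of Definition \ref{sdp:wcr}, combined with the lemma immediately following it (the equivalence between WCR and the inner semicontinuity of $x\mapsto S(x,x\s)$ at $x\s$), produces a sequence $d^k\to d$ with $d^k\in S(x^k,x\s)$. By construction, $Dh(x^k) d^k = 0$ (so the $\rho_k Dh^\T Dh$ term drops out when evaluated at $d^k$) and $\mathcal{U}_{\bar\beta}(g(x^k))^\T Dg(x^k)[d^k]\mathcal{U}_{\bar\beta}(g(x^k)) = 0$. The inequality therefore reduces to
\[
(d^k)^\T \nabla_x^2 L(x^k,\omega^k,\mu^k) d^k + \rho_k \inner{V^k[Dg(x^k)[d^k]]}{Dg(x^k)[d^k]} \geqslant -o(1).
\]

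The decisive step is to compute the limit of the curvature term $\rho_k \inner{V^k[Dg(x^k)[d^k]]}{Dg(x^k)[d^k]}$ and identify it with $d^\T \sigma(x\s,\omega\s) d$, where $\sigma(x\s,\omega\s)$ is Shapiro's sigma-term from \eqref{sdp:sigmaterm}. For this, I use Corollary \ref{cor:qi} with the spectral decomposition $g(x^k) = U^k \Lambda^k (U^k)^\T$, noting that $\omega^k$ is simultaneously diagonalized by $U^k$ since it is defined as a function of $g(x^k)$. Splitting the indices into $\alpha^k,\beta^k,\gamma^k$ according to the sign of the eigenvalues of $g(x^k)$, the corollary provides a blockwise description of $V^k[Dg(x^k)[d^k]]$ in the basis $U^k$. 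The off-diagonal $\alpha^k\gamma^k$-factor from Qi's formula, when multiplied by $\rho_k$, rearranges as $\omega^k_j/(\lambda^k_i - \lambda^k_j)$ for $i\in \alpha^k$ and $j\in \gamma^k$, which tends to $\omega\s_j/\lambda\s_i$ — precisely the inverse-eigenvalue factor appearing in $g(x\s)^\dag$ inside Shapiro's formula. The convergence $\mathcal{U}_{\bar\beta}(g(x^k))\to U_{\bar\beta}$ provided by Lemma \ref{sdp:lemma-bs}, combined with the vanishing of the $\bar\beta\bar\beta$-block enforced by $d^k\in S(x^k,x\s)$, then identifies the limit with $d^\T \sigma(x\s,\omega\s) d$.

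The main obstacle lies in the spectral bookkeeping required for this limit identification: the partition $\alpha^k\cup\beta^k\cup\gamma^k$ does not in general align with the $\bar\alpha\cup\bar\beta$ split of $g(x\s)$, since some small positive eigenvalues of $g(x^k)$ (in $\alpha^k$) may converge to zero and therefore lie inside the span of $\mathcal{U}_{\bar\beta}(g(x^k))$. One must show that these cases contribute negligibly in the limit, because their corresponding $\omega^k$-entries are zero, and similarly that the $V^k_{\vert\beta^k\vert}[\,\cdot\,]\in \partial \Pi_{\mathbb{S}^{\vert\beta^k\vert}_+}(0)$ piece collapses, since the relevant block of $\tilde{Dg(x^k)[d^k]}$ is forced to vanish by the WCR-derived constraint. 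Once this identification is in hand, passing $k\to \infty$ in the inequality (using continuity of $\nabla_x^2 L$ and $x^k\to x\s$) yields $d^\T[\nabla_x^2 L(x\s,\omega\s,\mu\s)+\sigma(x\s,\omega\s)]d \geqslant 0$ for every $d\in S(x\s)$, which is WSOC.
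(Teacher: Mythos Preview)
Your proposal follows the same route as the paper's proof: the penalty sequence from Theorem~\ref{sdp:minakkt}, the nonsmooth second-order condition of Theorem~\ref{thm:nonsmooth2nd} with the chain rule of Theorem~\ref{thm:chainrule}, boundedness of multipliers via Robinson's CQ, the WCR-based inner semicontinuity to produce $d^k\in S(x^k,x\s)$, and Qi's blockwise formula (Corollary~\ref{cor:qi}) to identify the limit of the curvature term with Shapiro's sigma-term.

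There is one slip in your spectral bookkeeping. For the small positive eigenvalues of $g(x^k)$ converging to zero (the block the paper calls $\alpha_-$), you claim their contribution is negligible ``because their corresponding $\omega^k$-entries are zero.'' But the relevant off-diagonal factor for $i\in\alpha_-$, $j\in\gamma$ is
\[
\rho_k\,\mathcal{B}(\lambda^{S^k}(-g(x^k)))_{ij}
=\frac{\rho_k(-\lambda_j^{S^k}(g(x^k)))}{\lambda_i^{S^k}(g(x^k))-\lambda_j^{S^k}(g(x^k))}
=\frac{\lambda_j^{S^k}(\omega^k)}{\lambda_i^{S^k}(g(x^k))-\lambda_j^{S^k}(g(x^k))},
\]
which involves $\lambda_j^{S^k}(\omega^k)$ with $j\in\gamma$ (possibly converging to a positive eigenvalue of $\omega\s$) over a denominator tending to zero --- so the factor can blow up, and the vanishing of $\lambda_i^{S^k}(\omega^k)$ for $i\in\alpha_-$ is irrelevant here. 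The correct mechanism is the one you already invoke for the $\beta$-block: since $\alpha_-\cup\beta\cup\gamma=\bar\beta$ for large $k$, the constraint $d^k\in S(x^k,x\s)$ forces $(S^k_{\bar\beta})^\T Dg(x^k)[d^k]\,S^k_{\bar\beta}=0$, so in particular the $\alpha_-\gamma$ block of $\tilde H^k$ is zero, and the Hadamard product $\tilde H^k_{\alpha_-\gamma}\odot\mathcal{B}_{\alpha_-\gamma}$ vanishes outright. With this correction your argument is complete and coincides with the paper's.
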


\begin{proof}
If $x\s$ is a local minimizer of \eqref{NSDP}, Theorem~\ref{sdp:minakkt} tells us that for any given $\{\rho_k\}_{k\in \N}\to +\infty$, there is some sequence $\seq{x}\to x\s$ such that, for every $k\in \N$, $x^k$ is a local minimizer of the penalty function $$F_k(x) =  f(x)+\frac{1}{4}\enorm{x-x\s}^4 
  + \frac{\rho_k}{2} \left(\fnorm{\Pi_{\sym_+}(-g(x))}^2
  + \enorm{h(x)}^2 \right).$$
Hence, it satisfies the first-order stationarity condition 
\begin{equation*}
\si{\nabla F(x^k)=\nabla f(x^k)+\enorm{x^k-x\s}^2(x^k-x\s) + Dh(x^k)^\T (\rho_k h(x^k) )- Dg(x^k)^*[\rho_k\Pi_{\mathbb{S}_+^m}(-g(x^k))]=0.}
\jo{\begin{split}
\nabla F(x^k)=\nabla f(x^k)+\enorm{x^k-x\s}^2(x^k-x\s) + Dh(x^k)^\T (\rho_k h(x^k) )- \hspace{1cm}\\ - Dg(x^k)^*[\rho_k\Pi_{\mathbb{S}_+^m}(-g(x^k))]=0.
\end{split}}
\end{equation*}
Setting the approximate multipliers $\omega^k\doteq \rho_k  \Pi_{\mathbb{S}_+^m}(-g(x^k))$ and $\mu^k\doteq \rho_k  h(x^k)$, we obtain \eqref{sdp:akktstat} and \eqref{sdp:akktcomp} due to Theorem~\ref{sdp:minakkt}. Also, $x^k$ is second-order stationary in the nonsmooth sense (see Theorem~\ref{thm:nonsmooth2nd}), which means that, for each unitary vector $d\in \R^n$, there exists some $\chi^k\in \partial (\Pi_{\mathbb{S}^m_+} \circ -g)(x^k)$ such that
\begin{equation}\label{sdp:akkt2ex}
\si{\begin{aligned}
d^\T\nabla^2 F(x^k)d
= & \ d^\T\left(\nabla^2 f(x^k)-D^2g(x^k)^*[\rho_k \Pi_{\mathbb{S}_+^m}(-g(x^k))]+\sum_{i=1}^p \mu_i^k\nabla^2 h(x^k)\right)d \ +\\
& \ +\rho_k (Dh(x^k)d)^\T Dh(x^k)d - d^\T\left( Dg(x^k)^*\left[\rho_k\chi^k[d]\right] \right) + d^\T\Delta^k d\geqslant \ 0
\end{aligned}}
\jo{\begin{aligned}
& \ d^\T\nabla^2 F(x^k)d \\
= & \ d^\T\left(\nabla^2 f(x^k)-D^2g(x^k)^*[\rho_k \Pi_{\mathbb{S}_+^m}(-g(x^k))]+\sum_{i=1}^p \mu_i^k\nabla^2 h(x^k)\right)d \ +\\
& \ +\rho_k (Dh(x^k)d)^\T Dh(x^k)d - d^\T\left( Dg(x^k)^*\left[\rho_k\chi^k[d]\right] \right) + d^\T\Delta^k d\\
\geqslant & \ 0
\end{aligned}}
\end{equation}
where $\Delta^k\doteq  \enorm{x^k-x\s}^2\I_n + 2(x^k-x\s)(x^k-x\s)^\T$ and $\nabla^2 F(x^k)$ denotes the element of $\partial^2 F_k(x^k)$ that is defined in terms of $\chi^k$, as an abuse of notation.
By Theorem~\ref{thm:chainrule}, there exists some $V^k\in \partial \Pi_{\mathbb{S}^m_+} (-g(x^k)),$ such that \jo{$}$\chi^k=V^k\circ -Dg(x^k),$\jo{$} for every $k\in \N$.

Under Robinson's CQ, the sequences $\seq{\omega}$ and $\seq{\mu}$ are bounded, so they have convergent subsequences which we will consider to be themselves from now on, without loss of generality. Denote their limits by $\omega\s$ and $\mu\s$, respectively. In~\cite[Thm. 6.1]{ahv}, the authors also prove that $\omega\s$ and $\mu\s$ are Lagrange multipliers associated with $x\s$. 

Now, let $d\in S(x\s)$. By WCR there is a sequence $\{d^k\}_{k\in \N}\to  d$ such that $d^k\in S(x^k,x\s)$ for every $k$. Rewriting \eqref{sdp:akkt2ex} in terms of $d^k$, $V^k$, $\omega^k$, and $\mu^k$, we obtain
\begin{equation}\label{sdp:akkt2re}
\si{(d^k)^\T \nabla^2_x L(x^k,\omega^k,\mu^k)d^k+\rho_k (Dh(x^k)d^k)^\T Dh(x^k)d^k+\rho_k\left\langle Dg(x^k)[d^k],V^k\left[Dg(x^k)[d^k]\right]\right\rangle\geqslant -\delta^k,}
\jo{\begin{split}(d^k)^\T \nabla^2_x L(x^k,\omega^k,\mu^k)d^k+\rho_k (Dh(x^k)d^k)^\T Dh(x^k)d^k+\hspace{2cm}\\+\rho_k\left\langle Dg(x^k)[d^k],V^k\left[Dg(x^k)[d^k]\right]\right\rangle\geqslant -\delta^k,\end{split}}
\end{equation}
where $\delta^k\doteq(d^k)^\T\Delta^kd^k\to 0$. The following paragraphs prove that \eqref{sdp:akkt2re} implies
\begin{equation}\label{sdp:wsoncre}
d^\T \nabla^2_x L(x\s,\omega\s,\mu\s)d+2\left\langle Dg(x\s)[d], \omega\s Dg(x\s)[d] g(x\s)^{\dag}\right\rangle\geqslant 0,
\end{equation}
which is enough to complete the proof since $$2\left\langle \omega\s, Dg(x\s)[d]  g(x\s)^{\dag} Dg(x\s)[d] \right\rangle=d^\T\sigma(x\s,\omega\s)d,$$ for every $d\in \R^n$ due to \eqref{sdp:sigmaterm}.

To complete that task, we\jo{ proceed to } analyse the behaviour of the sequence $\{\rho_k\langle Dg(x^k)[d^k],V^k[Dg(x^k)[d^k]]\rangle\}_{k\in\N}$ in distinct cases. In the following paragraphs, we let $\alpha\doteq \alpha(x^k)$, $\beta\doteq \beta(x^k)$, and $\gamma\doteq \gamma(x^k)$ be the sets of indices of the positive, zero and negative eigenvalues of $g(x^k)$, respectively, regarding the spectral decomposition $$g(x^k)=S^k \textnormal{Diag}(\lambda^{S^k}(g(x^k)))(S^k)^\T$$ with $S^k\to U$. Recall that, by construction, the columns of $\mathcal{U}_{\bar{\beta}}(g(x^k))$ span the eigenspace associated with the $\bar\beta$ smallest eigenvalues of $g(x^k)$, for all $k$ sufficiently large. Denote the submatrix of $S^k$ that has the eigenvectors associated with the $\bar\beta$ smallest eigenvectors of $g(x^k)$ in its columns by $S^k_{\bar\beta}$, and since $d^k\in S(x^k,x\s)$, we have $(S^k_{\bar\beta})^\T Dg(x^k)[d^k]S^k_{\bar\beta}=0$ for every $k$ large enough. We proceed by analysing a few cases:

\begin{enumerate}
\item If $g(x\s)\succ 0$, then $-g(x^k)\prec 0$ for $k$ sufficiently large. For such $k$, since $\gamma(x^k)=\beta(x^k)=\emptyset$ and $\alpha(x^k)=\{1,\ldots,m\}$, we obtain and $V^k[Dg(x^k)[d^k]]=0$ from Proposition~\ref{sdp:propqi}, which implies $$\rho_k\langle Dg(x^k)[d^k], \ V^k[Dg(x^k)[d^k]]\rangle=0.$$  Also, note that $\sigma(x\s,\omega\s)=0$ in this case, because $\langle g(x\s),\omega\s\rangle =0$ implies $\omega\s=0$. 

\item If $g(x\s)=0$, then $g(x\s)^{\dag}=0$ and $\sigma(x\s,\omega\s)=0$ as well. On the other hand, note that 
\begin{equation*}
\begin{array}{ll}
\langle Dg(x^k)[d^k], \ V^k[Dg(x^k)[d^k]]\rangle &=\langle D\tilde{g}(x^k)[d^k], \ (U^k)^\T V^k[Dg(x^k)[d^k]]U^k\rangle\\
& =0,
\end{array}
\end{equation*} 
because $\bar{\beta}=\{1,\ldots,m\}$ and $d^k\in S(x^k,x\s)$ implies \jo{$}$D\tilde{g}(x^k)[d^k]=(S^k)^\T Dg(x^k)[d^k]S^k=0$\jo{$} in this case.

\item If $g(x\s)\succeq 0$, but $g(x\s)\neq 0$, assume the diagonalization is taken such that nonzero eigenvalues are separated from the others and the common zero eigenvalues between $g(x\s)$ and $\omega\s$ are discriminated, that is, 

\begin{center}
$g(x\s)=U\begin{bmatrix}
\Lambda & 0 & 0\\ 
0 & 0 & 0\\
0 & 0 & 0
\end{bmatrix}
U^\T
$
\ \ and \ \
$\omega\s=U\begin{bmatrix}
0 & 0 & 0 \\
0 & 0 & 0 \\ 
0 & 0 & \Gamma\\ 
\end{bmatrix}U^\T ,$
\end{center}

where $\mathbb{S}^{\vert\bar{\alpha}\vert}\ni \Lambda\succ 0$ and $\mathbb{S}^{\vert\bar{\gamma}\vert}\ni\Gamma\succ 0$ are diagonal matrices, $\bar{\kappa}\cup\bar{\gamma}$ is a partition of $\bar{\beta}$, and $U$ is orthogonal. Denoting $H\doteq Dg(x\s)[d]$, since $d\in S(x\s)$ we get $\tilde{H}_{\bar{\beta}\bar{\beta}}=0$ and

\begin{center}
\begin{tabular}{ll}
$g(x\s)^{\dag} H \omega\s$ &$=$ 
$U
\begin{bmatrix}
\Lambda^{-1} & 0 & 0 \\ 
0 & 0 & 0 \\
0 & 0 & 0 \\
\end{bmatrix} U^\T  U
\begin{bmatrix}
\tilde{H}_{\bar{\alpha}\bar{\alpha}} & \tilde{H}_{\bar{\alpha}\bar{\kappa}} & \tilde{H}_{\bar{\alpha}\bar{\gamma}}\\ 
\tilde{H}_{\bar{\kappa}\bar{\alpha}} & 0 & 0\\
\tilde{H}_{\bar{\gamma}\bar{\alpha}} & 0 & 0\\ 
\end{bmatrix} U^\T U\begin{bmatrix}
0 & 0 & 0 \\
0 & 0 & 0 \\ 
0 & 0 & \Gamma\\ 
\end{bmatrix} U^\T$ \\[20pt]

& $=
U\begin{bmatrix}
0 & 0 & \Lambda^{-1}\tilde{H}_{\bar{\alpha}\bar{\gamma}}\Gamma \\ 
0 & 0 & 0\\
0 & 0 & 0\\
\end{bmatrix}U^\T.$
\end{tabular}
\end{center}

Conveniently, 
\begin{equation}
\Lambda^{-1}\tilde{H}_{\bar{\alpha}\bar{\gamma}}\Gamma=\left[\frac{\lambda^U_{j}(\omega\s)}{\lambda^U_i(g(x\s))} \tilde{H}_{ij}\right]_{i\in \overline{\alpha},j\in\overline{\gamma}}\doteq A\odot \tilde{H}_{\bar{\alpha}\bar{\gamma}},\\[5pt]
\end{equation}
where $\mathbb{R}^{\vert\overline{\alpha}\vert\times\vert\overline{\gamma}\vert}\ni A\doteq \left[\lambda^U_j(\omega\s)\lambda^U_i(g(x\s))^{-1}\right]_{i\in \overline{\alpha},j\in\overline{\gamma}}$. Also, note that

$$\langle H,g(x\s)^{\dag} H \omega\s \rangle=\left\langle\tilde{H}, \frac{1}{2}\begin{bmatrix}
0 & 0 & A\odot\tilde{H}_{\bar{\alpha}\bar{\gamma}} \\ 
0 & 0 & 0 \\
(A\odot\tilde{H}_{\bar{\alpha}\bar{\gamma}})^\T & 0 & 0 \\ 
\end{bmatrix}\right\rangle.$$

In view of this characterization of the sigma-term over $d$, its relation with \eqref{sdp:akkt2re} can be made explicit. Consider the following spectral decomposition of $g(x^k)$:

\begin{equation}\label{sdp:decgk}
g(x^k)=S^k\begin{bmatrix}
\Lambda^k_+ & 0 & 0 & 0\\
0 & \Lambda^k_- & 0 & 0\\
0 & 0 & 0 & 0\\  
0 & 0 & 0 & \Psi^k \\
\end{bmatrix}
(S^k)^\T,
\end{equation}
where we separate the eigenvalues of $g(x^k)$ primarily by their sign and, secondarily, by their limit points. For instance, $\Lambda^k_+\in \mathbb{S}^{\vert\alpha_+\vert}$ are the positive ones that converge to $\Lambda$, while $\Lambda^k_-\in \mathbb{S}^{\vert\alpha_-\vert}$ are the positive ones that converge to zero. The squared block of zeros in the diagonal of \eqref{sdp:decgk} is of dimension $\beta$ and $\Psi^k\in \mathbb{S}^{\vert\gamma\vert}$ contains the negative eigenvalues of $g(x^k)$. Also, $\vert\alpha_+\vert + \vert\alpha_-\vert + \vert\beta\vert + \vert\gamma\vert=m$. Recall that $S^k$ simultaneously diagonalizes $g(x^k)$ and $\omega^k$, by definition of $\omega^k$. In order to simplify the notation, define $$H^k\doteq Dg(x^k)[d^k]$$ and $$B^k_{\alpha\gamma}\doteq \tilde{H}_{\alpha\gamma}^k  \odot \mathcal{B}(\lambda^{S^k}(-g(x^k)))_{\alpha\gamma}.$$ Using the characterization of $V^k$ provided in \eqref{qi} from Proposition~\ref{sdp:propqi} (and Corollary~\ref{cor:qi}), we obtain

$$V^k[H^k]=S^k
\begin{bmatrix}
0 & 0 & 0 & B^k_{\alpha_+\gamma} \\
0 & 0 & 0 & B^k_{\alpha_-\gamma}\\
0 & 0 & V_{\vert\beta\vert}[\tilde{H}_{\beta\beta}] & \tilde{H}_{\beta\gamma}^k\\
(B^k_{\alpha_+\gamma})^\T  & (B^k_{\alpha_-\gamma})^\T & (\tilde{H}_{\beta\gamma}^k)^\T & \tilde{H}_{\gamma\gamma}^k \\
\end{bmatrix} (S^k)^\T. $$
Since $\rho_k \langle H^k,V^k[H^k]\rangle=\langle \tilde{H^k},\rho_k \tilde{V^k[H^k]}\rangle$, it is fundamental to note that for every $i\in \alpha_+$ and $j\in \gamma$,
\begin{equation}\label{sdp:conver-calb}
\left(\rho_k  \mathcal{B}(\lambda^{S^k}(-g(x^k)))\right)_{ij}=\frac{\rho_k  \lambda^{S^k}_j(-g(x^k))}{\lambda^{S^k}_j(-g(x^k))-\lambda^{S^k}_i(-g(x^k))}\to  \frac{\lambda^{U}_j(\omega\s)}{\lambda^{U}_i(g(x\s))},
\end{equation}
because $\rho_k \lambda^{S^k}_j(-g(x^k))=\lambda^{S^k}_j(\omega^k)$ and $\lambda^{S^k}_j(-g(x^k))\to 0$. Also, keep in mind that $$\lambda_i^{S^k}(-g(x^k))=-\lambda_i^{S^k}(g(x^k)).$$ The blocks indexed by $\alpha_-, \beta$, and $\gamma$, are all blocks of zeros because if $k$ is large enough, we must have $\vert\alpha_-\vert + \vert\beta\vert + \vert\gamma\vert=\bar{\beta}$ and, on the other hand, since $d^k\in S(x^k,x\s)$ we also have that $\tilde{H}_{\bar\beta\bar\beta}\doteq (S_{\bar\beta}^k)^\T Dg(x^k)[d^k]S_{\bar\beta}^k=0$. Similarly, $Dh(x^k)d^k=0$. Thus $$\lim_{k\to  \infty} \rho_k \langle H^k,V^k[H^k]\rangle=2\langle H,g(x\s)^{\dag} H \omega\s \rangle$$ and, consequently, \eqref{sdp:akkt2re} implies \eqref{sdp:wsoncre}, which means $x\s$ satisfies the WSOC with the multiplier $\omega\s$.  

\end{enumerate} 
\end{proof}

In the presence of nondegeneracy, the set of Lagrange multipliers is a singleton and Theorem \ref{sdp:minwsonc} recovers the classical result of~\cite{Shapiro1997}, but even without assuming uniqueness of the Lagrange multiplier it ensures there will be at least one multiplier satisfying  WSOC. Moreover, in contrast with~\cite{Jarre2012,Lourenco2018,Shapiro1997}, our proof does not require strict complementarity; but nevertheless, if it does hold, then the proof of Theorem~\ref{sdp:minwsonc} can be significantly simplified, since in this case the sequence $\{g(x^k)\}_{k\in \N}$ is nonsingular and we can avoid the use of subdifferentials.

\si{\section{Final remarks}\label{sec:conclusion}}
\jo{\section{Conclusion}\label{sec:conclusion}}

In this paper, we proved that every local minimizer of a nonlinear semidefinite program or a nonlinear second-order cone program satisfies the weak second-order necessary optimality condition under Robinson’s constraint qualification and the so-called weak constant rank property (WCR), which was extended from NLP \cite{ams07}. This joint condition is strictly weaker than nondegeneracy in NLP, NSOCP, and NSDP. We also stress that we do not assume strict complementarity, which is common in second-order analyses for conic programming. In contrast, our second-order necessary condition is based on the lineality space of the critical cone, and not the critical cone itself. This is consistent with the algorithmic practice of second-order algorithms as no algorithm is known to achieve a stronger second-order necessary optimality condition (see the extended version of \cite{Behling2017a} for details).

In the context of conic programming, several different approaches are known for obtaining second-order necessary optimality conditions \cite{bonnansramirez,Cominetti1990,Forsgren2000,Fukuda2016,Jarre2012,Lourenco2018,Shapiro1997}. We present a novel approach by extending the existing theory of first-order sequential optimality conditions to the second-order context. In particular, it is remarkable to see the appearance of the {\it sigma-term} in such a variety of approaches, which contributes to the understanding of this concept.

Our approach has a heavy algorithmic taste, as our proof is based on the construction of a sequence of approximate solutions of penalized subproblems, very similarly to a sequence generated by practical algorithms. In particular, a similar first-order approach has recently led to several improvements of global convergence theory of augmented Lagrangian methods in conic contexts \cite{Santos2019,Santos2021,Andreani2020,ahv}.

Thus, this paper opens the path to the development of second-order algorithms in conic optimization, which, as far as we know, has not been considered yet in the literature. In particular, augmented Lagrangian and interior point methods \cite{Birgin2016a,cakkt2} are expected to be well suited to the techniques we develop here. In this context, the joint condition ``Robinson’s CQ+WCR'' is the natural candidate for a condition to guarantee global convergence to a second-order stationary point.

\jo{
\section*{Statements and declarations}
The authors have no relevant financial or non-financial interests to disclose. Data sharing not applicable to this article as no datasets were generated or analysed during the current study.
}

\bibliographystyle{plain}


\end{document}